\newlist{steps}{enumerate}{1}
\newcommand\cyr{%
\renewcommand\rmdefault{wncyr}%
\renewcommand\sfdefault{wncyss}%
\renewcommand\encodingdefault{OT2}%
\normalfont
\selectfont}
\DeclareTextFontCommand{\textcyr}{\cyr} 
\DeclareFontFamily{OT1}{rsfs}{}
\DeclareFontShape{OT1}{rsfs}{n}{it}{<-> rsfs10}{}
\DeclareMathAlphabet{\mathscr}{OT1}{rsfs}{n}{it}
\numberwithin{equation}{section}
\newtheorem{theorem}{Theorem}[section]
\newtheorem{lemma}[theorem]{Lemma}
\newtheorem{proposition}[theorem]{Proposition}
\newtheorem{corollary}[theorem]{Corollary}
\theoremstyle{definition}
\newtheorem{remark}[theorem]{Remark}
\theoremstyle{remark}
\newcommand{\Spec}{\operatorname{Spec}}
\newcommand{\depth}{\operatorname{depth}}
\newcommand{\Frac}{\operatorname{Frac}}
\newcommand{\Proj}{\operatorname{Proj}}
\newcommand{\Max}{\operatorname{Max}}
\newcommand{\fm}{\mathfrak{m}}
\newcommand{\fa}{\mathfrak{a}}
\begin{document}
\title[Quasi-Gorenstein (normal)  finite covers in arbitrary characteristic]
{Quasi-Gorenstein (normal) finite covers in arbitrary characteristic}

\author[E. Tavanfar]{Ehsan Tavanfar}
\address{School of Mathematics, Institute for Research in Fundamental Sciences (IPM), P. O. Box: 19395-5746, Tehran, Iran}
\email{tavanfar@ipm.ir}

\thanks{2020 {\em Mathematics Subject Classification\/}: 13H10, 13B22, 14B25, 14J17.\\ 
This research   was supported by a grant from IPM}

\keywords{Normal domain, quasi-Gorenstein ring,  quasi-Gorenstein finite cover, quasi-Gorenstein projective variety.}


\begin{abstract} 
	   We show that any complete local (normal) domain  admits a module-finite  quasi-Gorenstein normal (complete local) domain extension. In the geometric vein, we show that any normal projective variety $X$ over a field admits a finite surjective morphism $Y\rightarrow X$ from a normal quasi-Gorenstein projective variety $Y$.   Notably, our results  resolve   the previously open case for residual characteristic two.
\end{abstract}

\maketitle
\tableofcontents

\section{Introduction}

   When working with singularities in algebraic geometry and commutative algebra, it is often desirable to obtain a  ``better" scheme that retains key properties of the original but is less singular or more regular. Grothendieck's conjecture on the existence of resolutions of singularities provides one such approach, arguably  the most important, and it has been established for projective varieties containing $\mathbb{Q}$ and for threefolds. By Zariski's main theorem,   resolutions of normal singularities are never finite, so-called a \textit{finite cover}. Here, by a \textit{finite cover} $Y$ of a normal scheme $X$,   a finite surjective and ``normal" scheme $Y$ over  $X$ is meant. If    $Y$ is additionally  quasi-Gorenstein (i.e. if the canonical divisor of $Y$ is a Cartier divisor), then we say that $Y$ is a \textit{quasi-Gorenstein finite cover} of $X$. 
   
   A natural question arises: what kind of a  better finite cover of a normal singular scheme exists?    Understanding ``better finite covers"  perhaps may also help in understanding regular alterations because an alteration of a finite cover of a projective variety $X$ is an alteration of $X$. One nice example of this type is that a strictly henselian local Gorenstein isolated singularity $(R,\fm)$ of dimension $2$ (so-called a rational double point) admits a non-singular finite cover (\cite{PostralEtALCovers}). 
   
   In this paper we address this question by proving the existence of a  quasi-Gorenstein finite cover $Y$ (over $X$), provided either $X$ is affine and is the spectrum of   a (normal) complete local domain, or $X$ is a normal projective variety over a field.
 
   Particularly, our  result  resolves, in the aforementioned cases, the previously open case of the existence of quasi-Gorenstein finite covers in  residual characteristic two.  Even though the quest for such finite covers does not seem to be questioned precisely in the literature, but   a rich history for it can be found in the literature.
   
   Most notably, a construction of  Yujiro Kawamata  in  \cite{KawamataCrepant} establishes the existence of quasi-Gorenstein finite covers for a very wide class of schemes with no point of residual characteristic $2$. Namely, the most general version of Kawamata's construction is the following theorem pointed out to the author by Karl Schwede who   learned the existence of such covers  from J\'anos Koll\'ar (in residual characteristic $\neq 2$) in a short conversation in 2012:

\begin{theorem}(mainly due to Y. Kawamata, this stated version  by Koll\'ar-Schwede)\label{TheoremKawamta}
  Suppose $X$ is a normal integral Noetherian scheme where no point has residue field characteristic equal to 2.  Suppose further there exists a Cartier divisor $H$, a canonical divisor $K_X$ and that some section $f \in \Gamma(X, \mathcal{O}_X(-2K_X + 2H))$ determines a reduced divisor $D \sim -2K_X + 2H$.  Then the associated ramified cyclic cover:
  \[
    Y = \Spec\big( \mathcal{O}_X \oplus \mathcal{O}_X(K_X-H) \big)
  \]
  is normal and quasi-Gorenstein.

If $X$ is a variety over an infinite field of characteristic $\neq 2$, it is easy to see that such an $f$ exists by Bertini's theorem when $H$ is sufficiently ample.

\end{theorem}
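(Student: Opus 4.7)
The plan is to realize $Y$ as the ramified cyclic double cover of $X$ cut out by $f$, and to check normality and quasi-Gorensteinness separately: normality via Serre's $(R_1)+(S_2)$ criterion, and quasi-Gorensteinness via the formula $\omega_{Y/X}\cong \pi^{*}\mathcal{L}$ for double covers coming from Grothendieck duality on the finite morphism $\pi\colon Y\to X$. Setting $\mathcal{L}:=\mathcal{O}_X(H-K_X)$, so that $\mathcal{L}^{\otimes 2}\cong \mathcal{O}_X(-2K_X+2H)$, the section $f$ endows $\mathcal{A}:=\mathcal{O}_X\oplus\mathcal{L}^{-1}$ with an $\mathcal{O}_X$-algebra structure in which multiplication on the degree-one summand is the composite $\mathcal{L}^{-1}\otimes\mathcal{L}^{-1}\cong\mathcal{L}^{-2}\xrightarrow{f}\mathcal{O}_X$. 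Locally on a trivializing open of $\mathcal{L}$ with generator $z$, this is the familiar presentation $R[z]/(z^{2}-f)$ for $\pi_{*}\mathcal{O}_Y$, so $\pi$ is finite flat of degree two; reducedness of $D$ also prevents $f$ from being a square, so $Y$ is integral.

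Normality is then a matter of verifying $(R_1)$ and $(S_2)$. Because $\pi$ is finite flat with zero-dimensional Cohen--Macaulay fibers, depth and codimension are preserved under $\pi$, so $(S_2)$ transfers from $X$ to $Y$. For $(R_1)$ I would examine codimension-one points of $Y$: over the unramified locus, the derivative $2z$ of $z^{2}-f$ is invertible -- here, and only here, is the residue-characteristic-not-$2$ hypothesis used -- so $\pi$ is \'etale and $Y$ inherits regularity from $X$. Over a generic point of a component of $D$, reducedness of $D$ forces $f$ to generate the maximal ideal of the DVR $\mathcal{O}_{X,\pi(y)}$ up to a unit, making $\mathcal{O}_{X,\pi(y)}[z]/(z^{2}-f)$ the ring of integers of a totally ramified quadratic extension, hence itself a DVR.

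For quasi-Gorensteinness I would invoke Grothendieck duality $\pi_{*}\omega_Y\cong \mathcal{H}om_{\mathcal{O}_X}(\pi_{*}\mathcal{O}_Y,\omega_X)$ together with the identification $\omega_{Y/X}\cong \pi^{*}\mathcal{L}$; the latter is read off from the local observation that inside $\mathcal{H}om_R(R[z]/(z^{2}-f),R)$ the functional dual to $z$ is a free $R[z]/(z^{2}-f)$-generator, and globally this functional is naturally a section of $\mathcal{L}$, pinning down the line-bundle twist. One then computes
\[
\omega_Y \;\cong\; \pi^{*}\omega_X \otimes \omega_{Y/X} \;\cong\; \pi^{*}\bigl(\mathcal{O}_X(K_X)\otimes\mathcal{O}_X(H-K_X)\bigr) \;\cong\; \pi^{*}\mathcal{O}_X(H),
\]
which is a genuine line bundle because $H$ is Cartier; hence $Y$ is quasi-Gorenstein. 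The final Bertini addendum is then straightforward: choose $H$ ample enough that $-2K_X+2H$ is very ample, and apply Bertini over the infinite field to produce a general section whose vanishing is smooth -- in particular reduced -- on the smooth locus of $X$; since $X$ is normal, its singular locus has codimension $\geq 2$, so the resulting divisor $D$ is reduced as a Cartier divisor. The main technical obstacle is the correct globalization of the trace-dual identification $\omega_{Y/X}\cong \pi^{*}\mathcal{L}$, requiring careful bookkeeping of duality for finite flat morphisms, together with the indispensable auxiliary input that $2$ is invertible in every residue field, which both keeps $z^{2}-f$ separable and makes the trace map nondegenerate.
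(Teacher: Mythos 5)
There is a genuine gap, and it sits at the heart of the statement: you treat $\mathcal{L}=\mathcal{O}_X(H-K_X)$ as a line bundle and $\pi\colon Y\to X$ as finite \emph{flat} of degree two. But only $H$ is assumed Cartier; $K_X$ is merely a Weil divisor, and the entire point of the theorem is the case where $K_X$ is \emph{not} Cartier (otherwise $X$ is already quasi-Gorenstein and nothing needs proving). So $\mathcal{O}_X(K_X-H)$ is only a rank-one reflexive sheaf: there is no local generator $z$, no global presentation $R[z]/(z^2-f)$, the ordinary tensor square $\mathcal{L}^{\otimes 2}$ is not $\mathcal{O}_X(2H-2K_X)$ (only its reflexive hull is), and $\pi$ fails to be flat exactly along the locus where $K_X$ is not Cartier. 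Consequently every step of yours that invokes flatness or invertibility collapses in the intended generality: the transfer of $(S_2)$ via ``finite flat with CM fibers,'' the duality formula $\omega_{Y/X}\cong\pi^{*}\mathcal{L}$ for finite flat maps, and the computation $\omega_Y\cong\pi^{*}\omega_X\otimes\omega_{Y/X}\cong\pi^{*}\mathcal{O}_X(H)$. Even the algebra structure must be set up as the paper indicates, via a rational function $g$ with $\operatorname{div}(g)=D+2K_X-2H$, i.e.\ a multiplication $\mathcal{O}_X(K_X-H)\otimes\mathcal{O}_X(K_X-H)\to\mathcal{O}_X$ that factors through the reflexive hull; your Bertini addendum has the same defect, since $-2K_X+2H$ is not Cartier and so ``very ample'' does not literally apply to it.

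Parts of your argument do survive and indicate the repair. $(R_1)$ is fine: at codimension-one points $X$ is regular, $\mathcal{O}_X(K_X-H)$ is invertible there, and your DVR/\'etale computation (using $2$ invertible and reducedness of $D$) is correct. $(S_2)$ of $Y$ should instead be read off from the fact that $\pi_*\mathcal{O}_Y=\mathcal{O}_X\oplus\mathcal{O}_X(K_X-H)$ is a reflexive, hence $S_2$, module over the normal scheme $X$. For quasi-Gorensteinness one uses duality for finite (not necessarily flat) morphisms, $\pi_*\omega_Y\cong\mathcal{H}om_{\mathcal{O}_X}\big(\pi_*\mathcal{O}_Y,\omega_X\big)\cong\mathcal{O}_X(K_X)\oplus\mathcal{O}_X(H)$, and then verifies --- this is the nontrivial bookkeeping, using the multiplication by $g$ and the reducedness of $D$, checked in codimension one and extended by reflexivity --- that as a $\pi_*\mathcal{O}_Y$-module this is $\pi_*\mathcal{O}_Y\otimes\mathcal{O}_X(H)$, which is invertible on $Y$ precisely because $H$ (and only $H$) is Cartier. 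Note that the paper does not reprove this theorem at all: it defers to Kawamata and to McDonald's Lemma~3.5, where exactly this reflexive-sheaf version of your outline is carried out; as written, your proposal proves the theorem only under the additional hypothesis that $K_X$ is Cartier, which empties it of content.
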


  Here, Bertini theorem in the projective variety case holds by \cite[Bertini's Theorem 3.4.8]{FlennerOcarrolVogelJoins}.  The multiplication of $\mathcal{O}_Y$ in the statement is given by  (on an affine open subset $U$) $(0,x)(0,y):=(gxy,0)$   where  the fixed rational function $0\neq g\in K(X)$ is such that $\text{div}(g)=D+2K_X-2H$ and $x,y\in \big(\mathcal{O}_X(K_X-H)\big)(U)$. The essentially of finite type over a field of characteristic $\neq 2$ version of  Theorem \ref{TheoremKawamta} (without mentioning Bertini theorem in the statement) can be found in \cite[Lemma 3.5]{McDonaldMultiplier}. For the  proof of Theorem \ref{TheoremKawamta} an interested reader is referred to \cite{KawamataCrepant} and \cite[Lemma 3.5]{McDonaldMultiplier}. In the final section of this paper we show that a version of Theorem \ref{TheoremKawamta} in which the canonical divisor $K_X$ is replaced by an arbitrary Weil divisor, settles the complete local case of the main result of \cite{PatankarVanishing} concerning the regularity of a normal local domain whose absolute integral closure and residue field are tor-independent for some positive $i$.

A different approach for obtaining quasi-Gorenstein normal covers was established by Phillip Griffith in \cite{GriffithSomeResults} by finding  firstly a $\mathbb{Q}$-Gorenstein finite cover $S$ (by taking a normal extension of a Noether normalization) and then considering the canonical cover of $S$ which is known to be quasi-Gorenstein and normal provided the order of the canonical divisor of $S$ is relatively prime to the characteristic of the residue field. 

In this paper, our approach is to  complete  Griffith's approach by finding a quasi-Gorenstein finite cover of such an $\mathbb{Q}$-Gorenstein finite cover $S$, independent of the order of the canonical divisor of $S$ (thus we do not take the cyclic (canonical) cover). The common trick in our existence result and in that of Theorem \ref{TheoremKawamta} is that both are somehow applying Bertini theorem.

\section{Quasi-Gorenstein finite cover: the complete local case}

\begin{proposition}
    \label{PropositionCompleteInfiniteResidueField} Let $(R,\mathfrak{m})$
	be an excellent henselian local normal domain with an infinite residue field. Assume that at least one of the following conditions hold:
 \begin{enumerate}[(i)]\label{EnumerateTwoCasesCompleteOrGoodHenselian}
   \item\label{itm:1Complete} $R$ is complete.
   \item \label{itm:2Henselian} $R/\mathfrak{m}$ is algebraically closed and $R$ admits a separable Noether normalization. 
 \end{enumerate}  
   Then $(R,\mathfrak{m})$	admits a  generically separable quasi-Gorenstein finite cover $S$.
\end{proposition}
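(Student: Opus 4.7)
The plan is to execute Griffith's two-step strategy in all residue characteristics: first produce a module-finite generically separable extension $S_0/R$ with $S_0$ $\mathbb{Q}$-Gorenstein, then construct a quasi-Gorenstein module-finite cover $S/S_0$ by a Kawamata-style Bertini argument rather than by the classical canonical cyclic cover (the latter being the step that fails when the canonical index is divisible by the residue characteristic).

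\textbf{Step 1: Griffith reduction.} The hypotheses produce a regular local subring $A \subseteq R$ with $R$ module-finite and generically separable over $A$: case (ii) is assumed directly, and case (i) follows from the Cohen structure theorem combined with a generic linear change of variables (enabled by the infinite residue field). Take $L$ to be the Galois closure of $\Frac(R)/\Frac(A)$ and let $S_0$ be the integral closure of $A$ in $L$. Then $S_0$ is henselian local, normal, module-finite and generically separable over $R$, Cohen--Macaulay (since finite over the regular ring $A$), and by \cite{GriffithSomeResults} is $\mathbb{Q}$-Gorenstein: the class $[K_{S_0}]$ has finite order $n$ in $\Cl(S_0)$.

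\textbf{Step 2: Kawamata-style cover of $S_0$.} Following the template of Theorem \ref{TheoremKawamta}, I choose a Cartier divisor $H$ on $S_0$ and a section $f$ of the reflexive rank-one sheaf $\mathcal{O}_{S_0}(-2K_{S_0}+2H)$ whose associated Weil divisor $D$ is reduced, and form
\[
 S \;=\; \Spec\bigl(\mathcal{O}_{S_0} \oplus \mathcal{O}_{S_0}(K_{S_0}-H)\bigr),
\]
with multiplication determined by $f$. Reducedness of $D$ together with separability of the defining extension gives normality of $S$; the ramification formula combined with the triviality of Cartier classes on the local ring $S_0$ yields $[K_S]=0$ in $\Cl(S)$, i.e.\ quasi-Gorensteinness. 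The key input is the reduced section $f$, obtained by a local Bertini argument exploiting the infinite residue field of $S_0$ (inherited from $R$ as a finite extension) — either by passing to a projective model and invoking \cite[Bertini's Theorem 3.4.8]{FlennerOcarrolVogelJoins}, or by perturbing a given rational function $g$ with $\operatorname{div}(g)=D+2K_{S_0}-2H$ by generic elements of $\mathfrak{m}_{S_0}$. The same genericity can be arranged so as to simultaneously guarantee, in residual characteristic two, that $f$ is not a square in $\Frac(S_0)$, which makes the induced extension $\Frac(S)/\Frac(S_0)$ separable. Henselianness of $S_0$ and finiteness of $\pi\colon S\to S_0$ then force $S$ to be semi-local henselian, and a further generic choice on $f$ makes $S$ local and integral.

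\textbf{Main obstacle.} The substantive content is handling residual characteristic two, excluded in Theorem \ref{TheoremKawamta} because the Kummer-type degree-two cover degenerates there. The Bertini argument must therefore be strong enough to yield an $f$ simultaneously giving a reduced divisor $D$ and a separable degree-two extension, and it may need to be supplemented by an Artin--Schreier-type variant when $p=2$; verifying that such an $f$ exists in the henselian local setting, and that the cover $S$ so produced remains an integral local henselian normal domain which is finally module-finite and generically separable over $R$, is the technical heart of the proof, and is exactly where the infinite residue field hypothesis is essential.
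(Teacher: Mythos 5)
Your Step 1 coincides with the paper's reduction: pass to the integral closure in the Galois closure of a generically separable Noether normalization, so that one may assume $R$ is $\mathbb{Q}$-Gorenstein of some index $n$. The gap is in Step 2. There you essentially re-run Kawamata's degree-two cyclic cover $S=\Spec\big(\mathcal{O}_{S_0}\oplus\mathcal{O}_{S_0}(K_{S_0}-H)\big)$, but that construction is exactly what breaks in residual characteristic $2$ --- the one case the proposition is meant to settle. Concretely: in equal characteristic $2$ the cover is generically obtained by adjoining a square root, and adjoining a square root of a non-square is \emph{purely inseparable} in characteristic $2$ (the minimal polynomial $X^2-c$ has vanishing derivative), so your claim that choosing $f$ not a square ``makes the induced extension separable'' is false; moreover the standard proofs of normality of $S$ and of the ramification-formula computation of $K_S$ rest on tameness of the ramification along the reduced divisor $D$, which again requires $2$ to be invertible (in mixed characteristic $(0,2)$ generic separability is automatic, but wild ramification above $2$ destroys the normality and canonical-divisor arguments). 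Your proposed remedy --- ``an Artin--Schreier-type variant when $p=2$'' --- is precisely the missing content: such a cover interacts with the canonical class in a completely different way, and no argument is given that it yields a normal quasi-Gorenstein module-finite cover.

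The paper avoids degree-two covers altogether. Once $R$ is $\mathbb{Q}$-Gorenstein of index $n$, it forms the full anticanonical (symbolic Rees) algebra $\mathscr{R}[\omega_R^{(-1)}t]$, which by Goto--Herrmann--Nishida--Villamayor is a Noetherian quasi-Gorenstein normal domain of depth $\ge 3$, and then cuts it by one general element $z=ft^n+(\sum_i k_{n-1,i}w_{n-1,i})t^{n-1}+\cdots+(\sum_i k_{0,i}m_i)$ furnished by the local Bertini theorems of Flenner (equal characteristic) and Ochiai--Shimomoto (mixed characteristic), applied through the completion. Normality of the hypersurface quotient comes from Bertini; quasi-Gorensteinness from the fact that a normal hypersurface section of a quasi-Gorenstein ring of depth $\ge 3$ is quasi-Gorenstein (\cite[Corollary 2.8]{TavanfarTousiAStudy}); module-finiteness over $R$ from arranging the coefficient of $ft^n$ to be $1$; and generic separability from arranging the degree-one coefficient to be nonzero, so the generic fibre is defined by a polynomial whose derivative is nonzero --- an argument that is characteristic-free, in particular valid in residual characteristic $2$. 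To repair your proof you would need to replace Step 2 by an argument of this kind, or actually carry out the characteristic-two cyclic/Artin--Schreier analysis you only gesture at.
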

\begin{proof}
	If $\dim(R)\le 1$, then $R$ is regular and there is nothing to prove. So we assume that $\dim(R)\ge 2$. 
 
		By our hypothesis, there exists a Noether	normalization $\sigma:P\rightarrow R$ which is  generically separable, i.e. $P$ is a  regular local ring, the extension $\sigma$ is module finite and induces an isomorphism on the residue fields and $\text{Frac}(P)\rightarrow\text{Frac}(R)$ is a separable finite field extension (here we stress that for the complete case in  \ref{itm:1Complete},    this is a well known fact that a Noether normalization exists, whose generic separability is immediate in characteristic zero and holds in prime characteristic  by virtue of \cite{KuranoShimomoto}). 
  
  In particular,  the extension $\text{Frac}(P)\rightarrow\text{Frac}(R)$  is primitive and $\text{Frac}(R)=\text{Frac}(P)[\theta]$	for some separable element $\theta\in\text{Frac}(R)$. So the splitting field, $L$, of the minimal polynomial of $\theta$  over $\text{Frac}(P)$ is a finite separable normal extension, i.e. a Galois extension, of $\text{Frac}(P)$. 	Let $R'$ be the integral closure of $R$ (equivalently, $P$) in	$L$.	
  
  The domain  $R'$ is a  module finite extension of $R$ (\cite[Lemma 1, page 262]{Matsumura}, or \cite[Theorem 4.3.4]{HunekeSwansonIntegral} for only the complete case), thus it is  an excellent henselian (\cite[Tag 04GH]{StacksProject}) local normal domain  extension of $R$ which admits a Noether normalization (in case \ref{itm:1Complete} $R'$ is complete and thus it admits a Noether normalization, and in case \ref{itm:2Henselian} $P\rightarrow R'$ is a Noether normalization of $R'$ too by the algebraically closedness of $R/\mathfrak{m}$). In particular, as a generically Galois extension of $P$,  $R'$ is $\mathbb{Q}$-Gorenstein  in light of  and art of \cite[Theorem 1.3(2) and Corollary 2.4]{KuranoOnRoberts}, or more originally, in light of  and art of \cite[Theorem 2.1]{GriffithPathology} \footnote{About applying the second reference \cite[Theorem 2.1]{GriffithPathology}, this is the case as 
	the canonical ideal is fixed under any ring automorphism up to isomorphism,
	therefore the canonical class is fixed under any ring automorphism. Here, we should also remark that in the statement of \cite[Theorem 2.1]{GriffithPathology} it is assumed for the extension to be a \textit{normal extension} and in the given definition of a \textit{normal extension} in \cite[Page 37]{GriffithPathology} the author requires further that the order of the Galois group $G$ is invertible. However, chasing the proof of \cite[Theorem 2.1]{GriffithPathology} we can see that the hypothesis on the order of the Galois group being invertible is not used in the proof and therefore we are safe to apply  \cite[Theorem 2.1]{GriffithPathology}. Anyways, the first reference \cite[Theorem 1.3(2) and Corollary 2.4]{KuranoOnRoberts} does not require any such superfluous condition like invertibility of the order of a group.}.
	
	 Thus to complete the proof we may, and we do, assume furthermore that $R$ is    $\mathbb{Q}$-Gorenstein, say of index $n$, implying for some $f\in \Frac(R)$ that 
	 \begin{equation}
	   \label{EquationQGorensteinImpliesFiniteGeneratation}	
	   \mathscr{R}[\omega_R^{(-1)}t]_{kn+i}=f^k\mathscr{R}[\omega_R^{(-1)}t]_{i}, \ \ \forall\ 0\le i\le n-1,\ k\in \mathbb{N}, 
	 \end{equation}
	  where  $$\mathscr{R}[\omega_R^{(-1)}t]:=\oplus_{j\in \mathbb{N}_0} \omega_R^{(-j)}t^j$$  is  the \textit{anticanonical algebra} of $R$. It turns out that then  $\mathscr{R}[\omega_R^{(-1)}t]$ is a finitely generated  algebra over $R$ and therefore  $\mathscr{R}[\omega_R^{(-1)}t]$  (and thus its completion $\widehat{\mathscr{R}[\omega_R^{(-1)}t]}$ as well) is a Noetherian  quasi-Gorenstein normal domain of depth $\ge 3$, by \cite[Lemma (4.3)(1) and Theorem 4.5]{GotoEtALOnTheStructure}.
	 We may only need to clarify the mentioned depth lower bound, which is essentially required when applying  Bertini  type theorems  later on: Since $R$ is $\mathbb{Q}$-Gorenstein,  from display (\ref{EquationQGorensteinImpliesFiniteGeneratation}) we get the  quotient
	 $$\mathscr{S}:=\mathscr{R}[\omega_R^{(-1)}t]/ft^n=R\oplus \omega_R^{(-1)}t\oplus \cdots\oplus \omega_R^{(-n+1)}t^{n-1},\ \    (\omega_R^{(-n)}=fR)$$  showing  that $\depth(\mathscr{S})$ as a local  ring, equivalently as a finite $R$-module, is clearly bounded by $2$, i.e.  $\depth(\mathscr{R}[\omega_R^{(-1)}t])\ge 3$ as mentioned.
	 
	  Keeping the assumption  $\omega_R^{(-n)}=fR$,   let the finite set $\{w_{j,i}\}$ forms a minimal generating set of $\omega_R^{(-j)}$ for each $1\le j\le n-1$, and  $\mathfrak{m}=(m_1,\ldots,m_\mu)$. Hence,   $\{ft^n,\omega_{j,i}t^j,m_i\}$ generates the unique homogeneous maximal ideal of $\mathscr{R}[\omega_R^{(-1)}t]$. 
	   Then,  applying \cite[(4.2) Korollar]{FlennerDieSatze} (local Bertini theorem for the equal characteristic case) and \cite[Theorem (4.4) (Local Bertini theorem)]{OchiaiShimomotoBertini} (for the mixed characteristic case) to the completion, $\widehat{\mathscr{R}[\omega_R^{(-1)}t]}$, of $\mathscr{R}[\omega_R^{(-1)}t]$   we can find  a general choice of an element $z$ of the form 
	     \begin{equation}
	     	\label{EquationTheBertinieElement}
	       z=ft^n+(\sum_i k_{n-1,i}w_{n-1,i})t^{n-1}+\cdots + (\sum_i k_{1,i}w_{1,i})t + (\sum_i k_{0,i} m_i)
	     \end{equation}  
	   of $\mathscr{R}[\omega_R^{(-1)}t]$ such that it fulfills the following three properties:
	  
	  \begin{enumerate}[(i)]\label{EnumerateGeneralZProperties}
	   \item \label{itm:Normal} $\mathscr{R}[\omega_R^{(-1)}t]/(z)$ is a normal local domain, where the non-zero coefficients $k_{j,i}$'s in display (\ref{EquationTheBertinieElement}) are invertible elements of a prefixed coefficient ring of $\mathscr{R}[\omega_R^{(-1)}t]$ (that agrees with a prefixed coefficient ring of $R$ \footnote{The ring $R$ admits a Noether normalization as discussed before, thus it admits a coefficient ring as well.}). Particularly,  the normal domain $\mathscr{R}[\omega_R^{(-1)}t]/(z)$ is necessarily quasi-Gorenstein as well in view of \cite[Corollary 2.8]{TavanfarTousiAStudy}.   
	   
	   \item\label{itm:FirstIrreducibleIntersection} As $\mathscr{R}[\omega_R^{(-1)}t]$ admits a coefficient ring evidently,    applying  Bertini's theorem to $\widehat{\mathscr{R}[\omega_R^{(-1)}t]}$ actually  results in general (normal hypersurface) elements in $\mathscr{R}[\omega_R^{(-1)}t]$ rather than in its completion, therefore when we are applying the complete local version of  Bertini's theorem we can pass to the completion and  then come back safely. We stress that one important job done here is that   since two non-trivial open subsets of an irreducible space intersect non-trivially, we can indeed find such an element $z$  as mentioned in display (\ref{EquationTheBertinieElement}) whose summand $ft^n$ has coefficient $1$. 
	   
	   \item\label{itm:SecondIrreducibleIntersection} Again since the intersection of finitely many non-empty open subsets  of an irreducible space is non-empty, we  shrink  more the locus of coefficients $k_{j,i}$ of  the elements $z$ in display (\ref{EquationTheBertinieElement}) by intersecting with the non-zero locus of  the coefficient $k_{1,1}$, and we obtain some general element $z$ in display (\ref{EquationTheBertinieElement})   such that moreover $\sum_i k_{1,i}w_{1,i}\neq 0$ (because, $k_{1,1}\neq 0$ is invertible and $\{w_{1,i}\}$ is a minimal generating set of $\omega_R^{-1}$).   The conclusion of considering such a general element $z$ is an extension  $R\rightarrow S=\mathscr{R}[\omega_R^{(-1)}t]/(z)$ inducing the fraction field extension $$\Frac(R)\rightarrow \Frac(S)=\Frac(R)\otimes_RS=\Frac(R)[X]/(f(X))$$ for some  degree $n$ polynomial $f(X)$ whose monomial $X$ has a non-zero coefficient. Accordingly,  $\Frac(R)\rightarrow \Frac(S)$ is a finite separable extension as   $\partial f/\partial X\neq 0$ (\cite[Tag 09H9]{StacksProject}).		  
	\end{enumerate} 
	
	By \ref{itm:Normal},\ref{itm:SecondIrreducibleIntersection}   above  the   domain $\mathscr{R}[\omega_R^{(-1)}t]/(z)$   is quasi-Gorenstein, normal and generically separable. In view of our choice of  $z$ satisfying the condition (ii) above, it is also  readily seen that    $\mathscr{R}[\omega_R^{(-1)}t]/(z)$  is module-finite over $R$ (in view of display (\ref{EquationQGorensteinImpliesFiniteGeneratation})). So the statement follows.
\end{proof}

To extend the above result to the case of a complete local ring with a possibly finite residue field, we continue by presenting a remark followed by some lemmas.

\begin{remark}\label{RemarkFinitenessOfTheExtension}
	Let $(R,\fm)$ be a  local ring and $R'$ be a module finite extension of $R$ such that $\fm R'$ is a maximal ideal of $R'$. Then $(R',\fm R')$ is a local ring, simply because any maximal ideal of $R'$ contracts to $\fm$ hence it should contain the maximal ideal $\fm R'$.
\end{remark}

\begin{lemma}\label{LemmaStrictHenselizationAndDVRS}
	Let $V$ be a complete unramified discrete valuation ring of mixed characteristic with a perfect residue field of prime characteristic. Let $W$ be the strict henselization of $V$ (\cite[Tag 04GQ]{StacksProject}). Then $W$ is an unramified discrete valuation ring which is a  direct limit of module-finite \'etale local $V$-algebras, say $$W=\lim_{\underset{i\in I}{\longrightarrow}}V_i,$$ and the residue field of $W$ is the algebraic closure of $V$.	 
\end{lemma}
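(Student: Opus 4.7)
The plan is to express $W$ as the filtered colimit guaranteed by the definition of strict henselization and then transfer the DVR structure and the residue field computation from the finite stages to the limit.

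First I would unfold the Stacks definition \cite[Tag 04GQ]{StacksProject} to write $W = \varinjlim_{i \in I} V_i$ as a filtered colimit of pointed essentially \'etale local $V$-algebras $(V_i, \mathfrak{m}_i)$ equipped with compatible embeddings of their residue fields into a fixed separable closure $k^{\mathrm{sep}}$ of $k = V/pV$. The key simplification is that $V$ is already henselian: any essentially \'etale local $V$-algebra $V_i$ is, by construction, the localization at a prime over $pV$ of some finite \'etale $V$-algebra $A$, and by Hensel's lemma such an $A$ splits as a finite product of local finite \'etale $V$-algebras. Hence $V_i$ coincides with one of these factors and is itself module-finite \'etale over $V$, which establishes the desired presentation.

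Next I would verify the DVR structure at each finite stage and then pass to the limit. Since $V \to V_i$ is flat and preserves regularity and dimension, $V_i$ is regular local of dimension $1$, i.e.\ a DVR; unramifiedness of $V \to V_i$ gives $\mathfrak{m}_{V_i} = pV_i$, so $V_i$ is unramified with uniformizer $p$ and residue field $V_i/pV_i$ a finite separable extension of $k$. Exactness of filtered colimits yields $\mathfrak{m}_W = \varinjlim pV_i = pW$ and $W/pW = \varinjlim V_i/pV_i$; the latter equals $k^{\mathrm{sep}}$ by the indexing convention of the strict henselization, and hence equals $\bar{k}$ because $k$ is perfect. Since the strict henselization of a Noetherian local ring is Noetherian of the same dimension, $W$ is a one-dimensional Noetherian local domain with principal nonzero maximal ideal $pW$, so it is an unramified DVR with uniformizer $p$.

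The main potential obstacle is the first step, namely ensuring that the colimit can be taken over module-finite rather than merely essentially \'etale extensions. This is handled by Hensel's lemma together with the completeness (hence henselianness) of $V$, as sketched above. Once this is in place, the remaining verifications are routine and rely only on standard properties of \'etale morphisms and the exactness of filtered colimits.
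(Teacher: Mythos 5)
Your argument is correct in substance and shares the paper's skeleton: realize $W$ as a filtered colimit of local \'etale $V$-algebras whose residue fields are finite separable over $k$, show each stage is module-finite \'etale over $V$ using the strong local hypothesis on $V$, and then push the unramified DVR structure and the residue field computation through the colimit. The one imprecise point is the sentence ``by construction, the localization at a prime over $pV$ of some \emph{finite} \'etale $V$-algebra'': the colimit in \cite[Tag 04GQ]{StacksProject} is indexed by \'etale, not finite \'etale, $V$-algebras, so this reduction is exactly the step that requires proof rather than being part of the construction. It is standard and easily supplied: either present each \'etale algebra, near the relevant prime, as a localization of $V[x]/(f)$ with $f$ monic, split this finite $V$-algebra into local factors by henselianness and check that the factor in question is flat and unramified (hence finite \'etale) over $V$; or invoke the structure theorem for quasi-finite separated algebras over a henselian local ring (\cite[Tag 04GG]{StacksProject}), which exhibits the localization at a prime over $\mathfrak{m}_V$ as a local factor finite over $V$. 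Once that is in place, your route to module-finiteness genuinely differs from the paper's: the paper passes to local \'etale algebras with residue fields finite separable over $k$ and then applies the complete version of Nakayama's lemma, so it uses completeness of $V$ directly, whereas you use only henselianness via Hensel-type splitting, which is marginally more general and equally valid here. The remaining verifications in your write-up (each $V_i$ an unramified DVR with uniformizer $p$, exactness of filtered colimits giving $\mathfrak{m}_W=pW$ and $W/pW=k^{\mathrm{sep}}=\bar{k}$ by perfectness of $k$, Noetherianness and dimension one for $W$) correspond to the facts the paper cites from \cite[Tag 04GP]{StacksProject}.
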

\begin{proof}
	The last sentence of the statement holds clearly in view of   \cite[Tag 04GP(5)]{StacksProject} and the perfectness of the residue field of  $V$, and $W$ is evidently an unramified discrete valuation ring in view of  \cite[Tag 04GP(1),(3) and (4)]{StacksProject}.	
	
	 The almost not trivial fact here is perhaps the expression of  $W$ as a direct limit of module finite \'etale local $V$-algebras: By \cite[Tag 04GP(2)]{StacksProject}, and by an argument as in the proof of \cite[Tag 04GN]{StacksProject} and after localizing any $R$-algebra in the direct system at an appropriate element $g$, we can view $W$ as a direct limit of local \'etale $V$-algebras, each of which residually finite separable over the residue field of $V$. Thus, since $V$ is complete, so in view of the \textit{complete version of   Nakayama's lemma}, $W$ can be described as the aforementioned desired direct limit. 
\end{proof}

\begin{lemma}\label{LemmaExcellentProperty}
	Let $(R,\mathfrak{m})$ be a complete local ring with a perfect residue field and with a coefficient ring $V$.  Let $V\rightarrow W$ be either an algebraic closure of   $V$, or the strict henselization of $V$ in the mixed  characteristic case.
	Then  $(R\otimes_V W,(\fm))$ is an excellent henselian     Noetherian local ring with an algebraically closed residue field and $\beta_W:R\rightarrow R\otimes_V W$ is a  formally \'etale local ring homomorphism. In particular,    $R$ is normal  if and only if  $R\otimes_VW$ is so.
\end{lemma}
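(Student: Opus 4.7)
My plan is to identify $R \otimes_V W$ with the strict henselization $R^{\mathrm{sh}}$ of $R$, from which the listed properties will follow from classical theorems on henselization.

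First I will write $W = \varinjlim_{i \in I} V_i$ as a filtered colimit of module-finite étale local $V$-algebras: in the mixed characteristic case this is precisely Lemma \ref{LemmaStrictHenselizationAndDVRS}, while in the equal characteristic case $V = R/\mathfrak{m}$ is a perfect field, $W$ is its algebraic closure, and $\{V_i\}$ ranges over the finite subfields (each a finite separable, hence étale, extension of $V$). Setting $R_i := R \otimes_V V_i$ and using that tensor products commute with filtered colimits yields $R \otimes_V W = \varinjlim_i R_i$. Each $R_i$ is module-finite over $R$ and étale (by base change of $V \to V_i$), and its reduction modulo $\mathfrak{m} R_i$ is the residue-field extension $k_i := V_i/\mathfrak{m}_V V_i$ of $k := R/\mathfrak{m}$, which is a finite separable extension. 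By Remark \ref{RemarkFinitenessOfTheExtension}, $(R_i, \mathfrak{m} R_i)$ is local with residue field $k_i$. The colimit $R \otimes_V W$ is therefore local with maximal ideal $\mathfrak{m}(R \otimes_V W)$ and residue field $\varinjlim_i k_i = \overline{k}$, which is algebraically closed, as required.

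Since $R$ is complete, hence henselian, every finite étale local $R$-algebra with residue field a finite separable extension of $k$ arises as the base change $R \otimes_V V'$ of some finite étale local $V$-algebra $V'$, via the equivalence between finite étale algebras over a henselian local ring and finite étale algebras over its residue field (applied to both $R$ and $V$). Consequently the system $\{R_i\}$ is cofinal among all local ind-étale $R$-algebras whose residue field embeds into $\overline{k}$, so $R \otimes_V W$ is canonically a strict henselization $R^{\mathrm{sh}}$ of $R$. From this identification, Noetherianity and henselianity of $R \otimes_V W$ are classical facts about strict henselizations of Noetherian local rings, and excellence follows from Greco's theorem that the strict henselization of an excellent local ring remains excellent---applicable since $R$, being complete, is excellent. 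The map $\beta_W$ is formally étale as a filtered colimit of étale maps, formal étaleness being stable under filtered colimits.

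For the final clause, $\beta_W$ is a faithfully flat local homomorphism whose fibres are geometrically regular (the extension $V \to W$ being ind-étale, with closed fibre the field $\overline{k}$); therefore normality both descends and ascends between $R$ and $R \otimes_V W$ by the standard Serre $(R_1)+(S_2)$ argument in the flat local setting. I expect the main subtlety to lie in the identification with $R^{\mathrm{sh}}$---especially the passage from a filtered colimit of Noetherian rings to a Noetherian ring, which is not automatic---after which the rest of the proof reduces to citations from the classical theory of excellent henselian local rings.
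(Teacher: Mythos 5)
Your proposal is correct in substance, but it takes a genuinely different route from the paper after the common first step. Both arguments begin by writing $R\otimes_V W=\varinjlim_i R\otimes_V V_i'$ with each $R\otimes_V V_i'$ a module-finite étale local (hence complete) $R$-algebra, via Lemma \ref{LemmaStrictHenselizationAndDVRS} in mixed characteristic and trivially in equal characteristic. From there the paper verifies each property directly on the colimit: locality by Remark \ref{RemarkFinitenessOfTheExtension}, Noetherianity by Ogoma's theorem on inductive limits of Noetherian local rings, henselianity by Ford, formal étaleness by the Stacks project, universal catenarity by passing through a Cohen presentation, and finally excellence via Majadas's criterion for (formally) smooth homomorphisms, together with the regularity of $\beta_W$ for the normality transfer. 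You instead identify $R\otimes_V W$ with a strict henselization $R^{\mathrm{sh}}$ of $R$ — using that $R$ is henselian, that $k$ is perfect, and the equivalence between finite étale local algebras over a henselian local ring and finite separable extensions of its residue field, so that $R\otimes_V V_i'$ is the finite étale local $R$-algebra attached to $k_i$ and these are cofinal among finite subextensions of $\overline{k}/k$ — and then quote classical permanence theorems: Noetherianity and henselianity of $R^{\mathrm{sh}}$, and excellence via Greco's theorem. This is a legitimate and arguably cleaner argument; what it buys is the elimination of Ogoma, Ford, the Cohen-presentation catenarity step, and the delicate discussion of the meaning of ``smooth'' in Majadas, at the price of two nontrivial inputs you should cite carefully: the description of $R^{\mathrm{sh}}$ of a henselian local ring as the filtered colimit of the finite étale local algebras indexed by finite separable residue extensions (so that the transition maps match and the identification is canonical), and the fact that excellence is preserved under strict (not just plain) henselization, which is available in the literature but is of comparable depth to the paper's appeal to Majadas. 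Your treatment of the final normality claim (faithfully flat local map with geometrically regular fibres, i.e.\ a regular homomorphism) coincides with the paper's.
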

\begin{proof} 
	In view of Lemma \ref{LemmaStrictHenselizationAndDVRS} in the mixed characteristic, and evidently in the equal characteristic, the base change $R\otimes_VW$ is a direct limit of  rings $R\otimes_VV'_i$, each of which are module finite (and \'etale, as a base change of an \'etale map) over the complete local ring $R$ and such that $(\fm)\in \Max(R\otimes_VV'_i)$, hence are local (Remark \ref{RemarkFinitenessOfTheExtension}) and are complete. So   $\big(R\otimes_VW, (\fm)\big)$  is local, Noetherian (\cite[Theorem 1]{OgomaNoetherian}), henselian (\cite[Proposition 10.1.5]{FordSeparable}) and a formally \'etale $R$-algebra (\cite[Tag 00UR and Tag 031N]{StacksProject}). In particular, $R$ is normal if and only if so is $R\otimes_VW$, as $\beta_W$ is a regular homomorphism (\cite[Tag 07PM]{StacksProject}). It is readily seen that $R\otimes_VW$ is a homomorphic image of a regular local ring and thus it universally catenary, by considering the formally \'etale (flat) local extension $P\rightarrow P\otimes_V W$ wherein the complete regular local ring $P$ is  a Cohen presentation of $R$. Consequently, $R\otimes_VW$ is excellent if and only if it is quasi-excellent.
	 Finally,  $R\otimes_VW$ is excellent  by virtue of \cite[Corollary 6.10]{MajadasCommutative}. 
    It is perhaps necessary	 to give more comments on the statement of \cite[Corollary 6.10]{MajadasCommutative}:

	The term ``\textit{smooth}'' in the statement of \cite[Corollary 6.10]{MajadasCommutative} is equivalent to the vanishing of the first Andr\'e-Quillen homology supported at the target    as well as the projectivity of the   K\"ahler differentials  (i.e. $H_1(A,B,B)=0$ and $\Omega_{B/A}$  is projective,  in the notation of \cite{MajadasCommutative}), see \cite[proof of Theorem 6.2]{MajadasCommutative} or \cite[Property 2.4]{SpivakovskyANewProof}. In particular, if $\beta_W$ is formally \'etale then
	it is a regular homomorphism of local rings  by \cite[Tag 07PM]{StacksProject}, so condition (6) in \cite[Property 2.4]{SpivakovskyANewProof} holds by \cite[Property 2.8]{SpivakovskyANewProof} and thus $\beta_W$ is smooth (in the sense of \cite{MajadasCommutative} as we need)  by \cite[Property 2.4]{SpivakovskyANewProof} because $\beta_W$ is formally \'etale and so $\Omega_{R\otimes_VW/R}=0$ (\cite[Tag 00UO]{StacksProject}) \footnote{Apparently, this notion of ``\textit{smooth}" in  \cite{MajadasCommutative} and in \cite{SpivakovskyANewProof}  is the same concept of \textit{formally smoothness} which is called as \textit{quasi-smooth} in  Richard G. Swan's  \cite[page 1]{SwanNeronPopescu}, c.f. \cite[Theorem 3.4]{SwanNeronPopescu}, and  then we immediately have that formally \'etale implies formally smooth by definition. Thus, we are allowed to apply \cite[Corollary 6.10]{MajadasCommutative}.}.    		
\end{proof}

\begin{theorem}\label{TheoremCoverCompleteCase}	Any complete local normal domain  $(R,\fm)$ admits  a generically separable quasi-Gorenstein finite cover. 
\end{theorem}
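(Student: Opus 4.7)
The strategy is to reduce to Proposition~\ref{PropositionCompleteInfiniteResidueField}\ref{itm:2Henselian} by first performing the base change of Lemma~\ref{LemmaExcellentProperty} along a coefficient ring extension $V\to W$ (with $W$ an algebraic closure of $V$ in the equal characteristic case, or the strict henselization of $V$ in the mixed characteristic case), applying the proposition to the enlarged ring, and then descending the resulting cover back to $R$.

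After disposing of the trivial case $\dim R\leq 1$ (where $R$ is already regular), assume $\dim R\geq 2$ and set $R^{*}:=R\otimes_V W$. By Lemma~\ref{LemmaExcellentProperty}, $R^{*}$ is an excellent henselian local normal domain with algebraically closed residue field, and $R\to R^{*}$ is formally \'etale. I would then verify that $R^{*}$ admits a separable Noether normalization: starting from a separable Noether normalization $P\to R$ of the complete local ring $R$ (which exists by \cite{KuranoShimomoto}), base changing produces a module-finite generically separable map $P\otimes_V W\to R^{*}$, and $P\otimes_V W$ is itself a regular local ring by exactly the same argument as in Lemma~\ref{LemmaExcellentProperty} applied to $P$ in place of $R$. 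Proposition~\ref{PropositionCompleteInfiniteResidueField}\ref{itm:2Henselian} then produces a generically separable quasi-Gorenstein finite cover $S^{*}\to R^{*}$.

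The final, and most delicate, step is to descend $S^{*}$ to a finite cover of $R$. Using Lemma~\ref{LemmaStrictHenselizationAndDVRS} (and its transparent equal-characteristic analogue), write $W=\varinjlim_{i\in I}V_i$ as a filtered colimit of complete module-finite \'etale local $V$-algebras, so that $R^{*}=\varinjlim_i(R\otimes_V V_i)$; each $R\otimes_V V_i$ is a finite product of complete local normal domains, each module finite over $R$ by Remark~\ref{RemarkFinitenessOfTheExtension}. Standard limit arguments produce an index $i_0$ together with a module-finite $(R\otimes_V V_{i_0})$-algebra $S_{i_0}$ with $S^{*}\cong S_{i_0}\otimes_{R\otimes_V V_{i_0}}R^{*}$, and for $i$ sufficiently large the base change $S_i:=S_{i_0}\otimes_{R\otimes_V V_{i_0}}(R\otimes_V V_i)$ inherits normality and the quasi-Gorenstein property from $S^{*}$ via faithfully flat descent. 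The main obstacle I anticipate is controlling connectedness: a priori $S_i$ is only a finite product of normal quasi-Gorenstein local rings rather than a domain. This is resolved by passing to the idempotent summand of $S_i$ corresponding to the unique maximal ideal of $R^{*}$, giving a local normal quasi-Gorenstein domain module finite over the corresponding local summand of $R\otimes_V V_i$, which is itself module finite over $R$ by Remark~\ref{RemarkFinitenessOfTheExtension}; the composite map from $R$ into this summand is then the desired generically separable quasi-Gorenstein finite cover of $R$.
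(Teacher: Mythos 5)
Your route is essentially the paper's: after disposing of the infinite residue field case via Proposition \ref{PropositionCompleteInfiniteResidueField}, you base change along $V\to W$ as in Lemma \ref{LemmaExcellentProperty}, check that $R^{*}=R\otimes_VW$ satisfies the hypotheses of Proposition \ref{PropositionCompleteInfiniteResidueField}\ref{itm:2Henselian}, obtain a cover $S^{*}$, and descend it through the filtered colimit $W=\varinjlim V_i$ of Lemma \ref{LemmaStrictHenselizationAndDVRS}, using henselianness and the connectedness of $\Spec(S^{*})$ to see that the descended algebra is a local normal domain module-finite over $R$. This is exactly the paper's descent (done there by choosing polynomial presentations whose coefficients lie in some $R\otimes_VV'_i$); also note that your ``idempotent summand'' worry is vacuous, since $R\otimes_VV_i$ is local (Remark \ref{RemarkFinitenessOfTheExtension}) and faithful flatness together with the irreducibility of $\Spec(S^{*})$ forces $S_i$ itself to be local, which is how the paper argues. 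One small correction along the way: descending quasi-Gorensteinness from $S^{*}$ to $S_i$ is not bare ``faithfully flat descent''; the paper uses that $S_i\to S^{*}$ is formally \'etale (being a base change of $R\otimes_VV_i\to R^{*}$), hence a regular local homomorphism, and then invokes \cite[Theorem 4.1]{AoyamaGotoOnTheEndomorphism} -- you have this structure available, but it should be said.

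The genuine gap is the ``generically separable'' part of the conclusion, which you assert twice without argument, and which is not formal in residual characteristic $p$. First, you claim that base change yields a generically separable Noether normalization $P\otimes_VW\to R^{*}$; but $\Frac(R^{*})$ is not literally a base change of $\Frac(R)$, and the paper needs a real argument here, using \cite[Theorems 4.3.1, 4.5.6 and Exercise 4.4.9]{FordSeparable} together with the fact that $V((\mathbf{X}))\otimes_VW=\Frac(V[[\mathbf{X}]]\otimes_VW)$ is a field and that $\Frac(R)\otimes_VW$ is a finite product of fields, to conclude separability of the induced fraction field extension. Second, and more importantly, after descending $S^{*}$ to $S$ you declare $R\to S$ generically separable; separability of $\Frac(R)\to\Frac(S)$ does not follow directly from separability of $\Frac(R^{*})\to\Frac(S^{*})$, because $\Frac(R)\to\Frac(R^{*})$ is an infinite extension in characteristic $p$ and separability of subextensions of composites needs justification. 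The paper closes this with a dedicated argument: $\Frac(R)\to\Frac(R^{*})$ is formally \'etale (Lemma \ref{LemmaExcellentProperty} plus localization), its composite with the \'etale extension $\Frac(R^{*})\to\Frac(S^{*})$ is formally \'etale, and then the finite subextension $\Frac(R)\to\Frac(S)$ is seen to be separable via the Jacobi--Zariski sequence \cite[Tag 00S2]{StacksProject}. Without some version of these two verifications your proof only yields a quasi-Gorenstein finite cover, not a generically separable one.
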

\begin{proof}
	We need only to discuss the case where $R/\fm$ is a  finite field  and $\dim(R)\ge 2$, as the infinite residue field case holds by Proposition \ref{PropositionCompleteInfiniteResidueField}.  
 
 Let $V,W$ be as in the statement of Lemma \ref{LemmaExcellentProperty}, so   $(R\otimes_VW,(\fm))$ is an excellent  henselian  Noetherian local normal domain with an algebraically closed residue field.
	
	Let $V[[\mathbf{X}]]$  be a (generically) separable Noether normalization of $R$ (again we use  \cite{KuranoShimomoto} for the prime characteristic case),  providing  us with a Noether normalization    $V[[\mathbf{X}]]\otimes_VW\rightarrow R\otimes_VW$,    of $R\otimes_VW$,  that  is also generically separable as in the case of mixed characteristic  this separability is immediate and in the prime characteristic it follows from \cite[Theorem 4.3.1, Theorem 4.5.6 and Exercise 4.4.9]{FordSeparable}: namely, from \cite[Theorem 4.3.1 and Theorem 4.5.6]{FordSeparable} and our hypothesis we get $V((\mathbf{X}))\otimes_VW\rightarrow \Frac(R)\otimes_VW$ is a separable ring homomorphism and then the desired separability follows from \cite[Exercise 4.4.9 and Theorem 4.5.6]{FordSeparable} together with the facts that $V((\mathbf{X}))\otimes_VW=\Frac(V[[\mathbf{X}]]\otimes_VW)$ and $\Frac(R\otimes_VW)$ is a localization (at the same time a quotient) of  $\Frac(R)\otimes_VW$ which the latter (Artinian ring, as a finite $\big(V((\mathbf{X}))\otimes_VW\big)$-vector space) is a direct product of fields (note that $V((\mathbf{X}))\otimes_VW$ is a field because $V\rightarrow V((\mathbf{X}))$ is a primary field extension as   $V$ is algebraically closed in $V((\mathbf{X}))$) \footnote{Here, the reader might find the following mathoverflow question helpful: \href{https://mathoverflow.net/questions/82083/when-is-the-tensor-product-of-two-fields-a-field}{When is the tensor product of two fields a field?}.}.

	
	As $R\otimes_VW$ admits a (generically)  separable Noether  normalization  and it is an excellent normal henselian   local ring with an algebraically closed residue field (Lemma \ref{LemmaExcellentProperty}), we can apply Proposition \ref{PropositionCompleteInfiniteResidueField}\ref{itm:2Henselian} to obtain a generically separable quasi-Gorenstein finite cover $S'$ of $R\otimes_VW$.
    Such a quasi-Gorenstein finite cover $S'$ of  $R\otimes_VW$  is in fact ascended  from  a module finite   $R\otimes_VV'_i$-algebra $S$, where $V'_i$ is a module finite \'etale local  $V$-algebra, as in the statement of Lemma \ref{LemmaStrictHenselizationAndDVRS}:
    
    Namely, the coefficients of the polynomials $f_i(\mathbf{Y})$  generating the defining ideal of $S'$, in a presentation $$S'=(R\otimes_VW)[\mathbf{Y}]/\big(f_i(\mathbf{Y})\big)$$    
    as well as the coefficients of some  monic polynomials $h_j(Z)$ (respectively, polynomials   $g_{j,i}(\mathbf{Y})$) corresponding to  each $Y_j$ and describing the integrality of the image of $Y_j$ in $S'$ over $R\otimes_V W$ by $$ h_j(Y_j)=\sum g_{j,i}(\mathbf{Y})f_i(\mathbf{Y}),\ \ \text{s.t. $h_j(Z)\in (R\otimes_V W)[Z]$ is a monic poly.,}$$ belong to some $R\otimes_VV'_i$, for some module finite \'etale local $V$-algebra $V_i$ as in the statement of Lemma \ref{LemmaStrictHenselizationAndDVRS}. This enables  us to consider $$S=(R\otimes_VV'_i)[\mathbf{Y}]/\big(f_i(\mathbf{Y})\big)$$ as a module finite $R\otimes_VV'_i$-algebra which satisfies clearly $$S\otimes_{R\otimes_VV'_i}(R\otimes_VW)=S'.$$
    
    As $S$ is finite over the henselian local ring $R\otimes_VV'_i$, so $S$ is a finite product of local rings and since its faithfully flat base change as written above is the connected space $\Spec(S')$, so $S$ has to be a local ring. A similar reasoning shows that $S$ is normal and the quasi-Gorensteinness of $S$ also follows from \cite[Theorem 4.1]{AoyamaGotoOnTheEndomorphism}, as $S\rightarrow S'$, as a base change of the  formally  \'etale morphism $R\otimes_VV'_i\rightarrow R\otimes_VW$, is formally \'etale and thus a regular local homomorphism (\cite[Tag 07PM]{StacksProject}).	
	Thus the statement follows from the observation that  $S$ is a module finite local extension of $R$ as so is $R\otimes_VV'_i$.

 However, we still may need  to discuss the separability of $\Frac(R)\rightarrow \Frac(S)$. But, as we have argued above $\Frac(R\otimes_VW)$  is a localization of $\Frac(R)\otimes_VW$ (thus formally \'etale by \cite[Tag 04EG]{StacksProject}) and the base change $\Frac(R)\rightarrow \Frac(R)\otimes_VW$ is formally \'etale as well by Lemma \ref{LemmaExcellentProperty} and \cite[Tag 02HJ]{StacksProject}, therefore $\Frac(R)\rightarrow  \Frac(R\otimes_V W)$ is formally \'etale (\cite[Tag 02HL and Tag  02HI]{StacksProject}. Then, so is its composition with the \'etale (separable) field extension $\Frac(R\otimes_VW)\rightarrow \Frac(S')$ (\cite[Tag 00UR and Tag 00U3]{StacksProject}), consequently $\Frac(R)\rightarrow \Frac(S')$ is formally \'etale. Finally, its subextension $\Frac(R)\rightarrow \Frac(S)$ is \'etale as well, i.e. separable (this can be seen for example using  Jacobi-Zariski exact sequence (\cite[Tag 00S2]{StacksProject}) as well as \cite[Tag 00U2(7), Tag 02H9 and Tag 031J(6)]{StacksProject}).
\end{proof}


\section{Quasi-Gorenstein finite cover: the projective variety case}

The aim of this section is to prove the projective variety analogue of Theorem \ref{TheoremCoverCompleteCase} (see Theorem \ref{TheoremQuasiGorensteinCoverProjectivevariety}). We split the proof into the following preparatory lemmas. Among them, the key one is Lemma \ref{LemmaProjectiveVarietyQuasiGorensteinCoverFromQGorenstein}. Throughout this section $K_Z$ (respectively, $K(Z)$) denotes the canonical divisor   (respectively, the function field) of a projective variety $Z$ over a field $k$. Note that the canonical sheaf, $\omega_{Z/k}=\mathcal{O}_Z(K_Z)$, is independent of the base field $k$ in the sense that $\omega_{Z/k'}=\omega_{Z/k}$ for a finite field extension $k'/k$ and a $k'$-projective variety $Z$ (see \cite[Lemma 2.7]{TanakaBehavior}). In this article, by a projective variety over a field $k$ we mean a projective integral (irreducible and reduced) scheme of finite type over $k$.  In particular, in the statement of the    lemma below we used the term ``projective scheme" to let the scheme $X$ therein to be non-reduced.

\begin{lemma}\label{LemmaProjectiveVarietyQGorensteinExtension} Let $X$ be an irreducible projective scheme over  a  perfect field $k$. Then, there exists a finite surjective morphism $f:Y\rightarrow X$ from a $\mathbb{Q}$-Gorenstein projective variety $Y$ to $X$.
\end{lemma}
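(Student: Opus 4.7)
The approach is to globalize the local strategy of Proposition \ref{PropositionCompleteInfiniteResidueField}: in place of a Noether normalization of a local ring, I would start with a finite surjection from $X$ onto a projective space and then pass to a generically Galois cover, extracting global $\mathbb{Q}$-Gorensteinness of the cover from the local Kurano--Roberts / Griffith result cited earlier.

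First I would reduce to the case where $X$ is an integral projective scheme (i.e.\ a projective variety), since the closed immersion $X_{\mathrm{red}}\hookrightarrow X$ is a finite surjection, so any finite surjection onto $X_{\mathrm{red}}$ produces one onto $X$. Starting from an embedding $X\hookrightarrow \mathbb{P}^N_k$, a general linear projection from a codimension-$(n+1)$ linear subspace disjoint from $X$ (with $n=\dim X$) yields a finite surjective morphism $\pi:X\to \mathbb{P}^n_k$. Because $k$ is perfect, $K(X)/k$ is separably generated, and by an adaptation of the separable Noether normalization technique of \cite{KuranoShimomoto} used in Proposition \ref{PropositionCompleteInfiniteResidueField}, one can arrange $\pi$ to be generically separable. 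I would then take the Galois closure $L$ of the separable extension $K(\mathbb{P}^n_k)\hookrightarrow K(X)$ and let $Y$ be the normalization of $\mathbb{P}^n_k$ (equivalently, of $X$) in $L$. Then $Y$ is a normal projective variety, $Y\to X\to \mathbb{P}^n_k$ is finite surjective, and $Y\to \mathbb{P}^n_k$ is generically Galois with some finite group $G$.

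To verify that $Y$ is $\mathbb{Q}$-Gorenstein, I would apply the local Galois-cover result (\cite[Theorem 1.3(2) and Corollary 2.4]{KuranoOnRoberts} or \cite[Theorem 2.1]{GriffithPathology}) pointwise on $Y$. At each $y\in Y$ lying over $x=\pi(y)\in\mathbb{P}^n_k$, the semi-local ring $\mathcal{O}_{Y,\pi^{-1}(x)}$ is a finite Galois extension of the regular local ring $\mathcal{O}_{\mathbb{P}^n_k,x}$ with group $G$ (after passing, if needed, to strict henselizations so that a full decomposition subgroup acts on the single local factor at $y$); since the divisor class group of a regular local ring is trivial, the cited result shows that the class of $K_Y$ in $\Cl(\mathcal{O}_{Y,y})$ is killed by $|G|$. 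Consequently $|G|\cdot K_Y$ is Cartier at every stalk, so $\mathcal{O}_Y(|G|K_Y)$ is a line bundle on $Y$, i.e.\ $Y$ is $\mathbb{Q}$-Gorenstein of index dividing $|G|$.

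The main obstacle I anticipate is arranging generic separability of $\pi:X\to \mathbb{P}^n_k$ over a small (e.g.\ finite) base field $k$, where a naive general-position argument on the Grassmannian of linear projections may fail from scarcity of $k$-rational points. A safe workaround is to base-change to a suitable finite separable extension of $k$, construct the cover there, and then descend; alternatively, one can adapt a projective/graded version of the separable Noether normalization of \cite{KuranoShimomoto}. The local-to-global step of the $\mathbb{Q}$-Gorenstein verification is then routine, as being a line bundle is a stalkwise property of coherent sheaves.
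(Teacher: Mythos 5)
Your construction of $Y$ is the same as the paper's: reduce to the integral case, take a generically separable finite surjection onto $\mathbb{P}^n_k$, pass to the Galois closure $L$ of the function field extension, and let $Y$ be the normalization of $X$ (equivalently of $\mathbb{P}^n_k$) in $L$. Where you genuinely diverge is the verification that $Y$ is $\mathbb{Q}$-Gorenstein. The paper globalizes by passing to the section ring $R\big(Y,(\delta\circ f)^*\mathcal{O}_{\mathbb{P}^n_k}(1)\big)$, which is a module-finite generically Galois extension of the coordinate ring of $\mathbb{P}^n_k$ (after adjoining a variable $t$), and then invokes a \emph{graded} version of Kurano--Roberts/Griffith to get $\omega^{(n)}\cong R(Y,\mathcal{L})(a)$, hence the explicit identification $\mathcal{O}_Y(nK_Y)\cong\mathcal{L}^a$. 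You instead argue stalkwise over $\mathbb{P}^n_k$: the integral closure $B_x$ of the regular local ring $\mathcal{O}_{\mathbb{P}^n_k,x}$ in $L$ carries the $G$-action, $\Cl(\mathcal{O}_{\mathbb{P}^n_k,x})=0$, and the norm/transfer argument kills $|G|\,[\omega_{B_x}]$, so $|G|K_Y$ is Cartier at every point and $Y$ has index dividing $|G|$. This is a valid alternative: it avoids having to assert a graded analogue of the cited local theorems (which the paper does without proof), at the modest cost of justifying the application to the semi-local ring $B_x$ (the cited statements are local) --- either by running the norm argument directly on the Krull domain $B_x$, or via your strict henselization remark, where one must check that the relevant factor of $B_x\otimes\mathcal{O}^{sh}_{\mathbb{P}^n_k,x}$ is the integral closure in a Galois extension of $\Frac(\mathcal{O}^{sh}_{\mathbb{P}^n_k,x})$ and then descend freeness of the reflexive power along the faithfully flat map; both routes are standard. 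Two small remarks on the first step: the paper reduces to the \emph{normalization} of $X_{\mathrm{red}}$ (harmless, and it makes the Noether normalization references apply verbatim), and your worry about small fields is already covered by the paper's second reference (Kedlaya), which produces a generically separable finite morphism to projective space without assuming $k$ infinite, so the base-change-and-descend detour --- though it would also work, since $\mathbb{Q}$-Gorensteinness is insensitive to the base field --- is not needed.
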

\begin{proof}
  It suffices to prove the lemma for the normalization of the reduction, $X_\text{red}$, of $X$ (\cite[Tag 035B, Tag 035S and Tag 035Q(2)]{StacksProject}). Thus, we may and we do assume that $X$ is a normal projective variety. Let     $\delta:X\rightarrow \mathbb{P}^d_k$ be a generically separable Noether normalization of $X$.  In characteristic zero, simply take Proj of a Noether normalization of a standard coordinate ring of $X$ obtained by a homogeneous system of parameters consisting of elements of degree $1$ (see \cite[Theorem 1.5.17]{BrunsHerzogCohenMacaulay}). In the prime characteristic case it exists by \cite[V, Section 4, Theorem 8]{ZariskiSamuelCommutative} or by \cite[Theorem 1]{KedlayaMoreEtale}, noticing that $X$ is geometrically reduced by the perfectness of $k$ \footnote{It is obvious that the existence of a separable Noether normalization implies that the function field should be separably generated over $k$. It is interesting that this condition is  superfluous in the complete local case! Moreover, in contrast to the first reference, the second reference does not necessitate the infinity of $k$.}. As in the proof of Proposition \ref{PropositionCompleteInfiniteResidueField}, the  (separable)  function fields extension $[K(X):K(\mathbb{P}_k^d)]$ is a subextension of a finite Galois extension $[L:K(\mathbb{P}_k^d)]$. 
  Let $\{U_i\}$ be the affine open cover of $X$ given by $U_i=\delta^{-1}\big(D_+(x_i)\big)$  for the affine cover $D_+(x_i)$ of $\mathbb{P}^d_k=\Proj(k[x_0,\ldots,x_d])$.  
  
   Let $g:\Spec(L)\rightarrow X$ be the morphism given by the generic point of $X$, and let $\mathcal{A}$ be the integral closure of $\mathcal{O}_X$ inside $g_*(\mathcal{O}_{\text{Spec}(L)})$, see \cite[Tag 035G]{StacksProject} for the definition (the $\mathcal{O}_X$-module $g_*(\mathcal{O}_{\text{Spec}(L)})$ is quasi-coherent by \cite[II, Proposition 5.8(c)]{HartshorneAlgebraic}).  Set  $Y:=\Spec_X(\mathcal{A})$ that is equipped with its affine morphism $f:Y\rightarrow X$ such that $f^{-1}(U_i)=\Spec\big(\overline{\mathcal{O}_X(U_i)^L}\big)$. In particular, $f$ is a finite morphism (see \cite[Tag 01WI]{StacksProject} and \cite[Tag 032L]{StacksProject}),  and thus $Y$ is projective over $k$ by \cite[Tag 0B5V and Tag 0B45(5)]{StacksProject}.  
   The ample invertible sheaf  $\mathcal{L}:=(\delta \circ f)^*\big(\mathcal{O}_{\mathbb{P}^d_k}(1)\big)$ on $Y$, provides the module-finite graded ring homomorphism of (normal) coordinate rings $R\big(\mathbb{P}^d_k,\mathcal{O}_{\mathbb{P}^d_k}(1)\big)\rightarrow R(Y,\mathcal{L})$ whose induced   fraction fields extension is the  extension, $[L(t):K(\mathbb{P}^d_k)(t)]$, obtained from the Galois extension  $[L:K(\mathbb{P}^d_k)]$ by adjoining an indeterminate $t$, and thus it  is also a Galois finite field extension.
   
   Here, thus, a (local) graded version of \cite[Theorem 1.3(2) and Corollary 2.4]{KuranoOnRoberts} holds, implying that  $R(Y,\mathcal{L})$ is $\mathbb{Q}$-Gorenstein,  thus $$\omega_{R(Y,\mathcal{L})}^{(n)}=\Gamma_*\big(Y,\mathcal{O}_Y(K_Y)\big)^{(n)}=\Gamma_*\big(Y,\mathcal{O}_Y(nK_Y)\big)\cong R(Y,\mathcal{L})(a)$$ for some $a\in\mathbb{Z},$ from which we get that $\mathcal{O}_Y(nK_Y)\cong \mathcal{L}^a$ is an invertible sheaf (here $\Gamma_*(X,\mathcal{F})$ denotes the associated graded module to a coherent sheaf $\mathcal{F}$ on $R(X,\mathcal{L})$). Thus  $Y$ is $\mathbb{Q}$-Gorenstein and the proof is complete.
\end{proof}

The first part of the next lemma is comparable to   \cite[Theorem 2.9]{TanakaBehavior} which  applies to the more general case of a separated scheme with the cost of requiring  Cohen-Macaulayness.

\begin{lemma}\label{LemmaCanonicalSheafAndBaseChange}
  Let $X$ be a projective scheme over a field $k$ and $k'/k$ be a field extension. Let $\epsilon:X_{k'}:=X\times_{k}k'\rightarrow X$ be the projection. Then
  \begin{enumerate}[(i)]
    \item \label{itm:ItemLemmaCanonicalSheafAndFieldBaseChange}$\omega_{X_{k'}/k'}=\epsilon^*(\omega_{X/k})$.
    \item \label{itm:ItemLemmaSymbolicPowerOfCanonicalSheafandBaseChange} If $X,X_{k'}$ are normal (irreducible) projective varieties, then $\omega_{X_{k'}/k'}^{(n)}=\epsilon^*(\omega_{X/k}^{(n)})$. Thus if $X$ is $\mathbb{Q}$-Gorenstein, then so is $X_{k'}$.
  \end{enumerate}
\end{lemma}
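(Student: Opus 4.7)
The plan for (i) is to identify $\omega_{X/k}$ as the top cohomology of the relative dualizing complex and then invoke flat base change. Setting $\pi: X\to\Spec k$, $\pi': X_{k'}\to\Spec k'$ and $j:\Spec k'\to \Spec k$, the square $\pi\circ\epsilon=j\circ\pi'$ together with compatibility of upper shriek under composition yields
\[
\omega_{X_{k'}/k'}^{\bullet}=\pi'^{!}\mathcal{O}_{\Spec k'}\cong \pi'^{!}j^{!}\mathcal{O}_{\Spec k}\cong \epsilon^{!}\pi^{!}\mathcal{O}_{\Spec k}=\epsilon^{!}\omega_{X/k}^{\bullet},
\]
once one has $j^{!}\mathcal{O}_{\Spec k}\cong\mathcal{O}_{\Spec k'}$ (because $j$ is flat of relative dimension zero with Gorenstein fibre), and $\epsilon^{!}\cong\epsilon^{*}$ (since $\epsilon$ is the flat base change of $j$). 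The flatness of $\epsilon$ makes $\epsilon^{*}$ exact on quasi-coherent sheaves, so it commutes with $H^{i}(-)$; since $\dim X_{k'}=\dim X$, taking $H^{-\dim X}$ of both sides gives $\omega_{X_{k'}/k'}\cong \epsilon^{*}\omega_{X/k}$.

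For (ii) the strategy is to exploit the reflexivity of symbolic powers on a normal scheme and reduce to (i) on the smooth locus. Let $U\subseteq X$ be the regular locus; by normality $X\setminus U$ has codimension $\geq 2$. Put $U_{k'}:=\epsilon^{-1}(U)\subseteq X_{k'}$; since $\epsilon$ is flat of relative dimension zero, $U_{k'}$ is smooth over $k'$ and $X_{k'}\setminus U_{k'}$ has codimension $\geq 2$ as well. Writing $\jmath:U\hookrightarrow X$ and $\jmath':U_{k'}\hookrightarrow X_{k'}$ for the open inclusions, normality and the definition of the symbolic power give
\[
\omega_{X/k}^{(n)}=\jmath_{*}(\omega_{U/k}^{\otimes n}),\qquad \omega_{X_{k'}/k'}^{(n)}=\jmath'_{*}(\omega_{U_{k'}/k'}^{\otimes n}).
\]
Part (i) applied on $U$ yields $\omega_{U_{k'}/k'}\cong(\epsilon|_{U_{k'}})^{*}\omega_{U/k}$, so $\omega_{U_{k'}/k'}^{\otimes n}\cong(\epsilon|_{U_{k'}})^{*}\omega_{U/k}^{\otimes n}$. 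Flat base change for the quasi-coherent pushforward through the open immersion $\jmath$ (Stacks, Tag 02KH) then chains these identifications together to give
\[
\epsilon^{*}\omega_{X/k}^{(n)}=\epsilon^{*}\jmath_{*}(\omega_{U/k}^{\otimes n})\cong \jmath'_{*}(\epsilon|_{U_{k'}})^{*}(\omega_{U/k}^{\otimes n})\cong \jmath'_{*}(\omega_{U_{k'}/k'}^{\otimes n})=\omega_{X_{k'}/k'}^{(n)}.
\]
The $\mathbb{Q}$-Gorenstein assertion then follows at once: if $\omega_{X/k}^{(m)}$ is invertible, then $\omega_{X_{k'}/k'}^{(m)}\cong \epsilon^{*}\omega_{X/k}^{(m)}$ is also invertible, being the pullback of an invertible sheaf.

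The main technical obstacle is the identification $\omega_{X_{k'}/k'}=H^{-\dim X}(\omega_{X_{k'}/k'}^{\bullet})$ after base change: this is immediate when $X$ is equidimensional, but in the generality of (i) (where $X$ is only assumed to be a projective scheme) one may need to restrict attention to the components of maximal dimension, or, equivalently, note that passing through $\pi^{!}$ and $H^{-\dim X}$ is compatible with the reasonable choice of canonical sheaf. A secondary nuisance is verifying $j^{!}\mathcal{O}_{\Spec k}\cong\mathcal{O}_{\Spec k'}$ for an arbitrary (possibly infinite, inseparable) field extension; if one prefers to avoid invoking Grothendieck duality at this level, an alternative route for (i) is to embed $X\hookrightarrow \mathbb{P}^{N}_{k}$, use the description $\omega_{X/k}^{\bullet}=R\mathcal{H}om_{\mathbb{P}^{N}_{k}}(i_{*}\mathcal{O}_{X},\omega_{\mathbb{P}^{N}_{k}}^{\bullet})$, and then apply flat base change through $\mathbb{P}^{N}_{k'}\to\mathbb{P}^{N}_{k}$ together with the elementary identity $\omega_{\mathbb{P}^{N}_{k'}/k'}=\mathcal{O}(-N-1)=\epsilon^{*}\omega_{\mathbb{P}^{N}_{k}/k}$.
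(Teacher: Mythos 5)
Your primary route for (i) has a real gap precisely in the case the paper needs: for an arbitrary (in particular infinite) extension $k'/k$ — and the paper applies this lemma with $k'=k^{\mathrm{alg}}$ or $k^{\infty}$ — the morphisms $j:\Spec(k')\rightarrow\Spec(k)$ and $\epsilon$ are not of finite type, so the exceptional inverse image functors $j^{!}$ and $\epsilon^{!}$ of Grothendieck duality are simply not available, and neither is the compatibility $(\pi\circ\epsilon)^{!}\cong\epsilon^{!}\pi^{!}$ you invoke; the problem is not merely the ``nuisance'' of checking $j^{!}\mathcal{O}_{\Spec(k)}\cong\mathcal{O}_{\Spec(k')}$. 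Your fallback route, however, is exactly the paper's proof and does repair this: the paper embeds $X\subseteq\mathbb{P}^n_k$ as a closed subscheme of codimension $r$, notes that the ideal sheaf of $X_{k'}\subseteq\mathbb{P}^n_{k'}$ is the pullback of that of $X$, writes $\omega_{X/k}=\mathscr{E}xt^r_{\mathbb{P}^n_k}(\mathcal{O}_X,\omega_{\mathbb{P}^n_k/k})$ and likewise over $k'$, uses $\epsilon_P^{*}(\omega_{\mathbb{P}^n_k/k})=\omega_{\mathbb{P}^n_{k'}/k'}$ via $\wedge^n\Omega$, and concludes by flat base change for $\mathscr{E}xt$ sheaves (EGA III, (12.3.4)--(12.3.5)); no dualizing complexes or $(-)^{!}$ are needed, and this works for any flat base change of fields. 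The equidimensionality caveat you raise (so that taking $H^{-\dim X}$, respectively fixing the codimension $r$, makes sense) is genuine but harmless in the paper's applications. So the proposal is acceptable only if the fallback is taken as the actual proof of (i).

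For (ii), your reduction to the regular locus plus flat base change along the open immersion is a valid alternative to the paper's one-line citation of \cite[Proposition 5.2.8]{IshiiIntroduction2nd}, but one assertion should be dropped: it is not true in general that $U_{k'}=\epsilon^{-1}(U)$ is smooth over $k'$ (over an imperfect field the regular locus need not be smooth, regularity is not preserved by inseparable base extension, and flatness of $\epsilon$ certainly does not yield smoothness). Fortunately smoothness is never used: what your argument actually requires is that $\omega_{X_{k'}/k'}|_{U_{k'}}\cong(\epsilon|_{U_{k'}})^{*}(\omega_{X/k}|_U)$ is invertible (which follows from part (i), since $U$ lies in the Gorenstein locus of $X$), that $X_{k'}\setminus U_{k'}$ has codimension at least $2$ (codimension is preserved under base field extension), and that on the normal variety $X_{k'}$ the symbolic powers $\omega^{(n)}_{X_{k'}/k'}$ are reflexive and hence computed by pushforward from any such open set. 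With that correction your chain of identifications, and the resulting statement that $\mathbb{Q}$-Gorensteinness passes to $X_{k'}$, is correct.
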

\begin{proof}
   \ref{itm:ItemLemmaCanonicalSheafAndFieldBaseChange} We consider $X$  as a closed subscheme of $\mathbb{P}^n_k$ of codimension $r$  (for some $n$).  Similarly, $X_{k'}$ is a closed subscheme of $\mathbb{P}^n_{k'}$ of codimension $r$ whose defining ideal sheaf is given by the pullback of the ideal sheaf of $X$ (\cite[Tag 01JU(1)]{StacksProject}) \footnote{Here, we recall that a faithfully flat extension preserves  codimension.}.  Thus 
   \begin{equation}
   \label{EquationCanoincalSheafExtDescription}
   \omega_{X/k}=\mathscr{E}xt^r_{\mathbb{P}^n_k}(\mathcal{O}_X,\omega_{\mathbb{P}^n_{k}/k}) \ \ \text{and} \ \ \omega_{X_{k'}/k'}=\mathscr{E}xt^r_{\mathbb{P}^n_{k'}}(\mathcal{O}_{X_{k'}},\omega_{\mathbb{P}^n_{k'}/k'}),
   \end{equation}
   see \cite[III, the proof of Proposition 7.5]{HartshorneAlgebraic} or    \cite[Definition 5.3.5]{IshiiIntroduction2nd} (which the latter has an additional integrality assumption). Also, by \cite[Tag 01V0]{StacksProject} and \cite[II, Definition in page 180 and Exercise 5.16(e)]{HartshorneAlgebraic}  for the projection $\epsilon_P:\mathbb{P}^n_{k'}\rightarrow \mathbb{P}^n_k$ we have
   \begin{equation}
   \label{EquationPullbackOfCanonicalSheaf}
   \epsilon_P^*(\omega_{\mathbb{P}^n_k/k})=\epsilon_P^*(\wedge^n\Omega_{\mathbb{P}^n_k/k})=\wedge^n\epsilon_P^*(\Omega_{\mathbb{P}^n_k/k})=\wedge^n\Omega_{\mathbb{P}^n_{k'}/k'}=\omega_{\mathbb{P}^n_{k'}/k'}.
   \end{equation}

   Thus the statement follows from  displays (\ref{EquationCanoincalSheafExtDescription}), (\ref{EquationPullbackOfCanonicalSheaf}) as well as \cite[Proposition (12.3.4) and  (12.3.5)]{GrothendieckEGAIII}.
   

   \ref{itm:ItemLemmaSymbolicPowerOfCanonicalSheafandBaseChange} This part follows from the previous part as well as \cite[Proposition 5.2.8]{IshiiIntroduction2nd}.
\end{proof}

The following lemma is probably well-known to experts and is needed for its subsequent lemma, in which it plays an important role.

\begin{lemma}\label{DiagonalLemma}
Let $X$ be a  projective variety over a field $k$ with line bundles $M,N$. Let $\mathcal{I}_{\Delta}$ denote the ideal sheaf of the diagonal closed immersion $\Delta:X\rightarrow X\times X$. If $H^1\big(X\times X,\mathcal{I}_\Delta\otimes(M\boxtimes N)\big)=0$, then the multiplication map $H^0(X,M)\otimes H^0(X,N)\rightarrow H^0(X,M\otimes N)$ is surjective.
\end{lemma}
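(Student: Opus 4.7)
The plan is to run the standard argument via the ideal sheaf sequence of the diagonal, twist by $M\boxtimes N$, and read off the conclusion from the associated long exact sequence in cohomology after applying the projection formula and the Künneth formula in degree zero.

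Concretely, I would start from the short exact sequence
\[
0\to \mathcal{I}_{\Delta}\to \mathcal{O}_{X\times X}\to \Delta_*\mathcal{O}_X\to 0
\]
on $X\times X$. Since $M\boxtimes N=\pi_1^*M\otimes \pi_2^*N$ is locally free (as $M,N$ are line bundles), tensoring with $M\boxtimes N$ preserves exactness. Using the projection formula for the closed immersion $\Delta$, together with the identification $\Delta^*(M\boxtimes N)=M\otimes N$, we get
\[
0\to \mathcal{I}_{\Delta}\otimes(M\boxtimes N)\to M\boxtimes N\to \Delta_*(M\otimes N)\to 0.
\]

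Next I would take the long exact sequence in $H^*(X\times X,-)$. The hypothesis $H^1\bigl(X\times X,\mathcal{I}_{\Delta}\otimes(M\boxtimes N)\bigr)=0$ gives surjectivity of
\[
H^0\bigl(X\times X, M\boxtimes N\bigr)\twoheadrightarrow H^0\bigl(X\times X,\Delta_*(M\otimes N)\bigr).
\]
Now identify the two sides: since $\Delta$ is a closed immersion, $H^0(X\times X,\Delta_*(M\otimes N))=H^0(X,M\otimes N)$; and by the Künneth formula for $H^0$ of proper $k$-schemes (equivalently, flat base change for the projections $\pi_1,\pi_2$), $H^0(X\times X, M\boxtimes N)=H^0(X,M)\otimes_k H^0(X,N)$.

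Finally I would check that under these identifications the surjection above is exactly the multiplication map: a decomposable section $s\otimes t$ of $\pi_1^*M\otimes \pi_2^*N$ pulled back along $\Delta$ becomes $\Delta^*(\pi_1^*s)\cdot \Delta^*(\pi_2^*t)=s\cdot t\in H^0(X,M\otimes N)$, and the identification is $k$-linear, so the surjection is indeed multiplication on decomposable tensors and hence on all of $H^0(X,M)\otimes H^0(X,N)$. The only step with any content is the Künneth identification in degree $0$, which is standard for proper $k$-schemes and coherent sheaves; everything else is routine.
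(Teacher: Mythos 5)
Your proof is correct and is essentially the same as the paper's: both twist the ideal-sheaf sequence of the diagonal by $M\boxtimes N$, use the vanishing hypothesis to get surjectivity of $H^0(X\times X,M\boxtimes N)\rightarrow H^0\big(X\times X,\Delta_*(\mathcal{O}_X)\otimes(M\boxtimes N)\big)$, and then conclude via the projection formula $\Delta_*(\mathcal{O}_X)\otimes(M\boxtimes N)\cong\Delta_*(M\otimes N)$ together with the K\"unneth identification in degree zero. Your extra check that the resulting surjection is the multiplication map on decomposable tensors is a welcome (if routine) detail the paper leaves implicit.
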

\begin{proof} 
  From the exact sequence $0\rightarrow \mathcal{I}_\Delta\rightarrow X\times X\rightarrow \Delta_*(\mathcal{O}_X)\rightarrow 0$, we get the cohomology exact sequence $H^0(X\times X,M\boxtimes N)\overset{\alpha}{\rightarrow} H^0\big(X\times X,\Delta_*(\mathcal{O}_X)\otimes(M\boxtimes N)\big)\rightarrow H^1\big(X\times X,\mathcal{I}_\Delta\otimes(M\boxtimes N)\big)$. Applying our hypothesis, thus the map $\alpha$ is surjective. Then, the statement follows from  K\"unneth formula $$H^0(X\times X,M\boxtimes N)\cong H^0(X,M)\otimes H^0(X,N),$$  because by \cite[Tag 04CI]{StacksProject}
  \begin{align*}
    H^0\big(X\times X,\Delta_*(\mathcal{O}_X)\otimes(M\boxtimes N)\big)&\cong 
    H^0\big(X\times X,\Delta_*(\Delta^*(M\boxtimes N)\big)
    \\ &\cong 
    H^0\big(X\times X,\Delta_*(M\otimes N)\big)
    \\ &\cong 
    H^0(X,M\otimes N).\qedhere
  \end{align*}  
\end{proof}

The proof of the following lemma is  using the idea of finding a way to force the lower symbolic powers to be generated in degree one, as well as  to work with the section ring of a divisor of the form $nK_X+H$ for some suitable  ample Cartier divisor $H$. This idea was suggested to the author  by Karl Schwede in conversations the author had with him and Peter McDonald.

\begin{lemma}\label{LemmaProjectiveVarietyQuasiGorensteinCoverFromQGorenstein}
Let   $X$ be a $\mathbb{Q}$-Gorenstein normal projective variety over an infinite field $k$. Then $X$ admits a quasi-Gorenstein finite cover.
\end{lemma}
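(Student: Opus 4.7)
The plan is to globalize the Bertini-hypersurface construction from Proposition~\ref{PropositionCompleteInfiniteResidueField}, following the suggestion (attributed by the paper to Schwede): twist the anticanonical class by a sufficiently positive Cartier divisor $H$ so that the lower symbolic powers of $-K_X$ become generated in degree one, and then work with the resulting sheaf algebra and its associated section ring. Let $n$ denote the $\mathbb{Q}$-Gorenstein index of $X$. I would first choose a very ample Cartier divisor $H$ on $X$ such that (a) $\mathcal{O}_X(jH - jK_X)$ is globally generated for each $j = 1, \ldots, n$; (b) $\mathcal{O}_X(nH - nK_X)$---a genuine line bundle since $nK_X$ is Cartier---is very ample; and (c) the multiplication $H^0(X, \mathcal{O}_X(H - K_X))^{\otimes j} \twoheadrightarrow H^0(X, \mathcal{O}_X(jH - jK_X))$ is surjective for each $j \leq n$. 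Conditions (a) and (b) are secured by Serre vanishing for $H$ sufficiently ample, and (c) is arranged by Lemma~\ref{DiagonalLemma} applied to sufficiently positive line bundles on $X\times X$.

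Next, form the sheaf of graded $\mathcal{O}_X$-algebras $\mathcal{A} := \bigoplus_{j \geq 0} \mathcal{O}_X(jH - jK_X)\, t^j$ with multiplication via the reflexive tensor product. At any stalk $\mathcal{A}_x$, the Cartier twist by $H$ trivializes and $\mathcal{A}_x$ coincides with the anticanonical algebra of the local $\mathbb{Q}$-Gorenstein normal domain $\mathcal{O}_{X,x}$; by \cite[Lemma (4.3)(1), Theorem 4.5]{GotoEtALOnTheStructure} this stalk is finitely generated, Noetherian, quasi-Gorenstein, normal, and of depth $\geq 3$. Using the infiniteness of $k$, I would then select a general global element
$$z = u\, t^n + \sum_{j = 0}^{n - 1} w_j\, t^j, \qquad u \in H^0(X, \mathcal{O}_X(nH - nK_X)),\quad w_j \in H^0(X, \mathcal{O}_X(jH - jK_X)),$$
with $u$ defining a reduced divisor on $X$ and the $w_j$'s chosen generically, directly modeled on display~(2.1). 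Set $Y := \Spec_X(\mathcal{A}/(z))$. Locally where $u$ does not vanish, $z$ is monic of degree $n$ in $t$, so $Y \to X$ restricts there to a rank-$n$ finite surjective morphism; normality and quasi-Gorensteinness of $Y$ follow stalkwise from the local Bertini argument of Proposition~\ref{PropositionCompleteInfiniteResidueField} together with \cite[Corollary 2.8]{TavanfarTousiAStudy}, and these being local properties then globalize to $Y$.

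The main obstacle is upgrading the finiteness of $Y \to X$ from the open locus $\{u \neq 0\}$ to all of $X$: where $u$ vanishes the leading coefficient of $z$ degenerates, and the fibre of $\Spec_X(\mathcal{A}/(z))$ may become one-dimensional. Overcoming this requires arranging the tuple $(u, w_0, \ldots, w_{n-1})$ to have empty common zero on $X$, which is achievable by sufficient ampleness of $H$ together with Bertini genericity whenever $\dim X \leq n$; for higher-dimensional $X$, one would include additional terms $z_j t^j$ with $j > n$, producing a cover of larger generic degree but preserving the stalkwise quasi-Gorenstein/Bertini argument. A secondary concern is simultaneously ensuring the normality Bertini on $\mathcal{A}/(z)$ and the base-point-freeness of the coefficient tuple; both should follow from the independent genericity of the chosen sections over the infinite residue field.
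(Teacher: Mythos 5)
Your setup conditions are already inconsistent in the only interesting case. If the multiplication map $H^0\big(X,\mathcal{O}_X(H-K_X)\big)^{\otimes n}\rightarrow H^0\big(X,\mathcal{O}_X(nH-nK_X)\big)$ of your condition (c) were surjective while $\mathcal{O}_X(nH-nK_X)$ is globally generated (your (b)), then at every point $x$ the stalk of $\mathcal{O}_X(nH-nK_X)$ would be generated by $n$-fold products of germs of $\mathcal{O}_X(H-K_X)$, i.e.\ the ordinary $n$-th power of the divisorial stalk would equal its invertible $n$-th symbolic power; since a product of fractional ideals that is principal forces each factor to be principal, $K_X$ would be Cartier at every point, i.e.\ $X$ is already quasi-Gorenstein. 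More generally, no Cartier twist can make the sheaf algebra $\mathcal{A}=\oplus_j\mathcal{O}_X(jH-jK_X)t^j$ generated in $t$-degree one: its stalk at a non-Cartier point is (a trivial twist of) the anticanonical algebra of $\mathcal{O}_{X,x}$, which is never standard graded. The paper's ``Claim'' is a different statement: generation in degree one refers to the auxiliary grading coming from the section ring $R=R\big(X,n(K_X+vH)\big)$ (the cone grading of the graded $R$-modules $M^{(s)}$), not to the $t$-grading, and that is what the regularity and diagonal-lemma arguments actually prove.

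The obstacle you yourself flag as the main one --- finiteness of $\Spec_X\big(\mathcal{A}/(z)\big)\rightarrow X$ along $\operatorname{div}(u)$ --- is not repaired by your proposed fixes. The finiteness mechanism of Proposition \ref{PropositionCompleteInfiniteResidueField} needs the $t^n$-coefficient to generate the invertible $n$-th symbolic power at the point in question, so that display (\ref{EquationQGorensteinImpliesFiniteGeneratation}) rewrites all high $t$-degrees modulo $z$; where $u$ vanishes this fails, and the fiber of $\Spec_X\big(\mathcal{A}/(z)\big)$ is in general positive-dimensional (a special fiber of the symbolic Rees algebra cut by one element), or even empty when only the constant term $w_0\in H^0(X,\mathcal{O}_X)$ survives, destroying surjectivity --- note also that in your relative setting the degree-zero coefficient is forced to be a constant, so the general translation $\sum_i k_{0,i}m_i\in\mathfrak{m}$ used by the local Bertini argument has no counterpart. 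Base-point-freeness of the tuple $(u,w_0,\dots,w_{n-1})$ does not address this, and adding terms $z_jt^j$ with $j>n$ only makes the leading coefficient worse (non-invertible symbolic power, or again a vanishing section of an ample bundle). A further gap: normality and quasi-Gorensteinness cannot be obtained ``stalkwise'', since the local Bertini theorems produce a general element for one (complete) local or graded ring at a time; there is no a priori single global $z$ that is simultaneously general for all stalks. The paper resolves all of this at once by passing to the section ring $R$ of $D=n(K_X+vH)$ and $M=\Gamma_*\big(X,\mathcal{O}_X(-K_X-vH)\big)$, so that $M^{(n)}\cong R(-1)$ is free: the leading coefficient of the Bertini element is then a nowhere-degenerating free generator (finiteness follows as in the local case), one graded Bertini application at the vertex of the cone gives normality globally, and quasi-Gorensteinness of $\Proj\big(S/(z)\big)$ is extracted from the canonical class formula for the symbolic Rees algebra in \cite{GotoEtALOnTheStructure} together with \cite[Corollary 2.8]{TavanfarTousiAStudy}, not merely from the stalks of $\mathcal{A}$.
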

\begin{proof}
  Fix some natural number $n$ so that   $nK_X$ is Cartier (which exists  in view of the definition of $\mathbb{Q}$-Gorensteinness). Let $d:=\dim(X)$ and $d'\ge d+1$ be the least multiple of $n$.  Let $H$ be a very ample Cartier divisor   on $X$. Fix  some natural number $v$   such that $
  \mathcal{O}_X(nK_X+wH)$ is a very ample line bundle for each $w\ge v$,  such that  
  \begin{equation}   
  \label{AppliedCohomologyVanishingOverX}
  \forall\ 1\le i\le d,\ 1\le s\le n-1,\ j\ge v: \ \ H^i\bigg(X,\mathcal{O}_X\Big(\big(n(d-i+1)-s\big)K_X+jH\Big)\bigg)=0 
  \end{equation}
  (here the index $s$ will play the role of the $s$-th symbolic power) and such that 
  \begin{equation}   
  \label{AppliedCohomologyVanishingOverXtimesX}
  \forall\    1\le j\le nd': \ \ H^1\Big(X\times X,\mathcal{I}_\Delta\otimes \big(\mathcal{O}_X(jK_X)\boxtimes \mathcal{O}_X(nK_X)\big)\otimes \big(\mathcal{O}_X(jH)\boxtimes \mathcal{O}_X(nH)\big)^{v}\Big)=0. 
  \end{equation}
  Such $v$ exists  by applying Serre's vanishing theorem to the relevant coherent sheaves with respect to the very ample $\mathcal{O}_X$-line bundle $\mathcal{O}_X(H)$ (respectively, $\mathcal{O}_{X\times X}$-very ample line bundle $\mathcal{O}_X(jH)\boxtimes \mathcal{O}_X(nH)$, $1\le j\le nd'$) for display (\ref{AppliedCohomologyVanishingOverX}) (respectively, for display (\ref{AppliedCohomologyVanishingOverXtimesX})), and then taking the maximum of the  resulted numbers.

   Set $D:=nK_X+nvH$, enabling us to choose $$R:=R(X,D)=\oplus_{j\ge 0}\Gamma\big(X,\mathcal{O}_X(jnK_X+jnvH)\big)$$ as the coordinate ring of $X$. The coherent sheaf  $\mathcal{F}:=\mathcal{O}_X(-K_X-vH)$  provides us with the $R$-module $$M:=\Gamma_*(X,\mathcal{F})=\oplus_{n\in\mathbb{Z}}\Gamma\big(X,\mathcal{F}\otimes\mathcal{O}_X(nD)\big)$$ whose $n$-th symbolic power $$M^{(n)}=\Gamma_*\big(X,\mathcal{O}_X(-nK_X-nvH)\big)\cong R(-1)$$ is a graded free module generated in degree $1$. Let $$S:=\mathscr{R}[Mt]=\oplus_{i\ge 0}M^{(i)}t$$ be the symbolic Rees algebra of the $R$-module  $M$. Using the vanishings in  displays (\ref{AppliedCohomologyVanishingOverX}) and (\ref{AppliedCohomologyVanishingOverXtimesX}), we aim to prove the following claim. 
   
   \textbf{Claim:} The graded module $M$ and its   symbolic powers $M^{(s)}$  ($2\le s\le n-1$)  are also generated in degree $1$. 
 
   Proof of the claim. We prove the claim for some fixed $1\le s\le n-1$. As the result of vanishings in (\ref{AppliedCohomologyVanishingOverX}),   the coherent sheaf $\mathcal{O}_X(-sK_X-svH)$ is $(d+1)$-regular (and thus $d'$-regular by \cite[Theorem 1.8.5(iii)]{LazarsfeldPositivityI}) with respect to $\mathcal{O}_X(nK_X+nvH)$, because $$H^i\bigg(X,\mathcal{O}_X\Big(\big(n(d+1-i)-s\big)K_X+ \underset{\ge v}{\underbrace{v'}}H\Big)\bigg)=0.$$ From this and \cite[Theorem 1.8.5(ii)]{LazarsfeldPositivityI}, we get  the surjectivity of the multiplication maps
   \begin{equation} 
   \label{EquationGeneration1}
     H^0\Big(X,\mathcal{O}_X\big((d'n-s)(K_X+vH)\big)\Big)\otimes H^0\Big(X,\mathcal{O}_X\big(\ell(nK_X+nvH)\big)\Big)\rightarrow H^0\bigg(X,\mathcal{O}_X\Big(\big((d'+\ell)n-s\big)(K_X+vH)\Big)\bigg).
   \end{equation}

  Moreover,  display  (\ref{AppliedCohomologyVanishingOverXtimesX}) and Lemma \ref{DiagonalLemma} yield the surjectivity of the multiplication maps
  \begin{equation}
  \label{EquationGenration2}
  H^0\big(X,\mathcal{O}_X(jK_X+jvH)\big)\otimes H^0(X,\mathcal{O}_X(nK_X+nvH)\big)\rightarrow H^0\Big(X,\mathcal{O}_X\big((j+n)(K_X+vH)\big)\Big)
  \end{equation}
  for each $1\le j\le nd'$. From the surjections in the displays (\ref{EquationGeneration1}) and (\ref{EquationGenration2}) it is readily seen that our claim holds. Thus, the proof of our claim is complete.
 
 We endow the symbolic Rees algebra $S$ with an $\mathbb{N}_0$-graded structure such that
 $$S_{[i]}:=\oplus_{j\in \mathbb{N}_0} M^{(j)}_{[i]}t^j.$$
 Our aforementioned claim shows that $S$ equipped with this grading is a standard graded ring. Additionally, applying our aforementioned claim  and considering a set of  degree one generators $\{w_{j,i}\}$ where $1\le j\le n-1$ (respectively, $\{m_1,\ldots,m_{\mu}\}$ and $\{f\}$) for $M^{(j)}$   (respectively, for $R_+$ and $M^{(n)}$),  by the same arguments as in the proof of Proposition \ref{PropositionCompleteInfiniteResidueField} we can apply Bertini theorem   to conclude the existence of a general choice of an element $z$ of the form 
	     \begin{equation}
	     	\label{EquationTheBertinieElementGraded}
	       z=ft^n+(\sum_i k_{n-1,i}w_{n-1,i})t^{n-1}+\cdots + (\sum_i k_{1,i}w_{1,i})t + (\sum_i k_{0,i} m_i)
	     \end{equation}  
         such that $S/(z)$ is normal and is a module-finite extension of $R$. Moreover, such  $z$ is indeed a degree one homogeneous element  with respect to the $\mathbb{N}_0$-grading of $S$  defined above, and $R\rightarrow S/(z)$ is a module-finite homogeneous homomorphism of standard graded rings inducing a finite morphism of normal varieties $Y:=\Proj\big(S/(z)\big)\rightarrow \Proj(R)=X$. It remains thus only to observe that $Y$ is quasi-Gorenstein, which we shall discuss it as follows. 

         Applying again \cite[Theorem (4.5)(1)]{GotoEtALOnTheStructure}, this time  with respect to the symbolic Rees algebra $S$ of $R$, we get the (canonical divisor) realization 
         \begin{align}
           \label{EquationCanonicalDivisorFormula}
           K_{S}=\kappa^*\Big([\Gamma_*\big(X,\mathcal{O}_X(K_X)\big)]\Big)+\kappa^*\Big([\Gamma_*\big(X,\mathcal{O}_X(-K_X-vH)\big)]\Big)
         =\kappa^*\Big([\Gamma_*\big(X,\mathcal{O}_X(-vH)\big)]\Big)
         \end{align}
         where $\kappa^*:\text{cl}(R)\rightarrow \text{cl}(S)$ is the  map on the divisor class groups induced by $\kappa:\Spec(S)\rightarrow \Spec(R)$ (which is an  isomorphism of Abelian groups, see \cite[Proposition (4.4)(1)]{GotoEtALOnTheStructure}), and the notation $[N]$ denotes the Weil divisor class corresponding to a rank one reflexive sheaf $N$.   One point is that,  $\Gamma_*\big(X,\mathcal{O}_X(-vH)\big)$ is an invertible sheaf on $\Spec(R)\backslash \{R_+\}$ (because $\mathcal{O}_X(vH)$ is an invertible sheaf of $\mathcal{O}_X$-modules). Thus, from this fact together with (\ref{EquationCanonicalDivisorFormula}) we conclude that  $K_{S}\rceil_{\kappa^{-1}\big(\Spec(R)\backslash \{R_+\}\big)}$ is a Cartier divisor.   This shows that $\kappa^{-1}\big(\Spec(R)\backslash \{R_+\}\big)$ is quasi-Gorenstein.  On the other hand, since $R\rightarrow S/(z)$ is finite, so the image of the  punctured spectrum  $\text{Spec}^\circ\big(S/(z)\big)$,  under the closed immersion $\text{Spec}\big(S/(z)\big)\rightarrow \Spec(S)$, lies in $\kappa^{-1}\big(\Spec(R)\backslash \{R_+\}\big)$. It turns out that  $\text{Spec}^\circ\big(S/(z)\big)$  is the closed hypersurface subscheme of $\kappa^{-1}\big(\Spec(R)\backslash \{R_+\}\big)$,   defined by the principal ideal sheaf of $z$. Hence, since $\kappa^{-1}\big(\Spec(R)\backslash \{R_+\}\big)$ is quasi-Gorenstein and its hypersurface $\text{Spec}^\circ\big(S/(z)\big)$ is normal, again from \cite[Corollary 2.8]{TavanfarTousiAStudy} we can conclude that $\text{Spec}^\circ\big(S/(z)\big)$ is quasi-Gorenstein. Finally,  this shows that $Y=\text{Proj}\big(S/(z)\big)$ is quasi-Gorenstein as well, as needed. Thus the proof is complete.
\end{proof}

We give the details of the proof of the following descent lemma, for the sake of completeness or a potential convenience.

\begin{lemma}\label{LemmaProjectiveDescent} Let $L/k$ be an algebraic field extension and $X$ be a normal projective variety over $k$ such that $X_L:=X\times_kL$ is irreducible. Let $X^v_L$ be the normalization of $X_L$ given by  the integral closure of the reduction $(X_L)_{\text{red}}$, of $X_L$, at its generic point. If $X^v_L$ admits a quasi-Gorenstein finite cover, then so does $X$.
\end{lemma}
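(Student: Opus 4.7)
The plan is to descend the given quasi-Gorenstein finite cover $Y_L \to X^v_L$ to a finite subextension $k \subseteq k' \subseteq L$, obtaining a finite morphism $Y \to X_{k'}$, and then to compose with the canonical finite surjection $X_{k'} \to X$. Since $X_L$ is of finite type over the field $L$ (hence Nagata) and is irreducible by hypothesis, the normalization $X^v_L \to X_L$ is finite, so the composite $f_L \colon Y_L \to X_L$ is a finite surjective morphism of projective $L$-schemes. It is this composite that I will descend, which has the advantage of avoiding the question of whether $X^v_L$ itself descends (it need not, as normalization does not in general commute with a purely inseparable base change).

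Writing $L = \varinjlim_\alpha k_\alpha$ as the filtered colimit of its finite subextensions over $k$, standard limit arguments for finitely presented morphisms (\cite[Tag 01ZM]{StacksProject}) together with descent of the property of being a finite morphism (\cite[Tag 081F]{StacksProject}) produce an index $\alpha$ and a finite morphism $f \colon Y \to X_{k'}$ of projective $k'$-schemes, where $k' := k_\alpha$, whose base change to $L$ recovers $f_L$. In particular $Y \times_{k'} L = Y_L$ as $X_L$-schemes.

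Next I verify that $Y$ is a normal integral quasi-Gorenstein projective variety over $k'$. The projection $\epsilon \colon Y_L \to Y$ is faithfully flat. Normality descends along faithfully flat morphisms, so $Y$ is normal; reducedness descends because $\mathcal{O}_Y \to \epsilon_*\mathcal{O}_{Y_L}$ is injective; and the irreducibility of $Y_L$ forces the irreducibility of $Y$ since $\epsilon$ is surjective. Hence $Y$ is an integral normal projective $k'$-variety. For quasi-Gorensteinness, Lemma \ref{LemmaCanonicalSheafAndBaseChange}\ref{itm:ItemLemmaCanonicalSheafAndFieldBaseChange} gives $\omega_{Y_L/L} = \epsilon^*(\omega_{Y/k'})$, and since the pullback is invertible while $\epsilon$ is faithfully flat, invertibility (local freeness of rank one) descends to $\omega_{Y/k'}$, so $Y$ is quasi-Gorenstein.

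To conclude, $X_{k'} \to X$ is finite and surjective since $k'/k$ is a finite extension of fields, and $f \colon Y \to X_{k'}$ is surjective by a direct fiber-chasing argument: for $x \in X_{k'}$, the fiber of $X_L \to X_{k'}$ over $x$ is nonempty, and any point of it lifts via the surjection $Y_L \to X_L$ to a point of $Y_L$ that projects to a point of $Y$ lying over $x$. Therefore $Y \to X$ is finite and surjective, and as the canonical sheaf is unchanged by the finite field extension $k'/k$ (as recalled at the start of this section, following \cite[Lemma 2.7]{TanakaBehavior}), $Y$ qualifies as the desired quasi-Gorenstein finite cover of $X$. I expect the main delicacy to be the descent step itself: working across the possibly inseparable extension $k' \hookrightarrow L$ forces us to descend the composite $Y_L \to X_L$ rather than $Y_L \to X^v_L$, and to then re-derive the cover-theoretic properties (normality, integrality, invertibility of the canonical sheaf) of $Y$ via faithfully flat descent results.
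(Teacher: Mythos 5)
Your proposal is correct and takes essentially the same route as the paper: descend the cover to a finite subextension of $k$ and then transfer normality, integrality and invertibility of the canonical sheaf back down by faithfully flat descent together with Lemma \ref{LemmaCanonicalSheafAndBaseChange}\ref{itm:ItemLemmaCanonicalSheafAndFieldBaseChange} and the base-field independence of $\omega$ recalled at the start of the section. The only real difference is presentational: the paper carries out the spreading-out by hand on graded section rings, producing a module-finite graded extension $A$ of $R\big(X,\mathcal{O}_X(1)\big)$ over a finite subextension $F$ and hence a finite surjection $\Proj(A)\rightarrow X$ directly, whereas you invoke the general limit formalism for the morphism $Y_L\rightarrow X_L$ and then compose with $X_{k'}\rightarrow X$.
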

\begin{proof}
   Let $f:Y\rightarrow X_L^{v}$ be a quasi-Gorenstein finite cover for $X_L^{v}$ and let $\pi:X_L^v\rightarrow X$ be the composition of the  projection and the normalization. Then $\pi\circ f:Y\rightarrow X$ is a surjective morphism of integral schemes that is also an integral morphism. Also, $\varphi:R\big(X,\mathcal{O}_X(1)\big)\rightarrow R\Big(Y,(\pi\circ f)^*\big(\mathcal{O}_X(1)\big)\Big)$ is an integral ring homomorphism. Let $L':=H^0(Y,\mathcal{O}_Y)$. There is a finite extension $F$ of $k$, contained in $L'$, such that it contains all of the coefficients of 
   \begin{itemize}
       \item the polynomials of the defining ideal of $R\Big(Y,(\pi\circ f)^*\big(\mathcal{O}_X(1)\big)\Big)$ as a quotient of a polynomial ring over $L'$.
       \item the polynomials representing   the image, under $\varphi$, of the generators of $R\big(X,\mathcal{O}_X(1)\big)$ over $k$.
       \item the polynomials representing the integrality of the generators of $R\Big(Y,(\pi\circ f)^*\big(\mathcal{O}_X(1)\big)\Big)$ over $R\big(X,\mathcal{O}_X(1)\big)$ similarly as in the case of the polynomials $h_j,g_{j,i}$ in the proof of Theorem \ref{TheoremCoverCompleteCase}.
   \end{itemize} 
   Then, we obtain a module-finite graded extension $A$ of $R\big(X,\mathcal{O}_X(1)\big)$, over $F$, such that $A\otimes_{F}L'\cong R\Big(Y,(\pi\circ f)^*\big(\mathcal{O}_X(1)\big)\Big)$ and therefore $Y\cong \Proj(A\otimes_FL')\cong \Proj(A)\times_FL'$. Then from Lemma \ref{LemmaCanonicalSheafAndBaseChange}\ref{itm:ItemLemmaCanonicalSheafAndFieldBaseChange}, \cite[Tag 0098]{StacksProject}, the faithfully flatness of the base field change and  quasi-Gorensteinness and normality of $Y$ we conclude that $\Proj(A)$ is a quasi-Gorenstein normal projective variety over $F$. Finally, the injective module-finite graded ring homomorphism $R\big(X,\mathcal{O}_X(1)\big)\rightarrow A$ yields a finite surjective morphism $\Proj(A)\rightarrow X$ that is a quasi-Gorenstein finite cover as needed.
\end{proof}

 The next result, which is the main result of this section,  establishes the existence of quasi-Gorenstein finite covers for projective varieties and covers the remained  case of characteristic $2$ where Theorem \ref{TheoremKawamta} does  not apply.  
\begin{theorem}\label{TheoremQuasiGorensteinCoverProjectivevariety} Let $X$ be a normal projective variety over a field. Then $X$ admits a quasi-Gorenstein finite cover.
\end{theorem}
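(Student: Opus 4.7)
The plan is to apply in succession Lemmas \ref{LemmaProjectiveVarietyQGorensteinExtension} and \ref{LemmaProjectiveVarietyQuasiGorensteinCoverFromQGorenstein} after base-changing to a perfect infinite field, and then to descend to the original base field $k$ via Lemma \ref{LemmaProjectiveDescent}. The subtle point is that Lemma \ref{LemmaProjectiveDescent} requires $X_L$ to be irreducible, so one must first arrange things so that $X$ becomes geometrically irreducible.

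First I would replace the base field $k$ by $k_1 := H^0(X, \mathcal{O}_X)$. Since $X$ is proper over $k$, $k_1$ is a finite field extension of $k$; since $X$ is normal integral, a direct argument using the integral closedness of the stalks shows that $k_1$ coincides with the integral closure of $k$ inside $K(X)$, that is, with the algebraic closure of $k$ in $K(X)$. In particular $k_1$ is algebraically, hence separably, closed in $K(X)$, so $X$ viewed as a normal projective $k_1$-variety is geometrically irreducible. Since finiteness of a morphism, normality, and quasi-Gorensteinness are intrinsic, and the canonical sheaf does not depend on the base field (by the remark recalled at the start of this section), any quasi-Gorenstein finite cover of $X$ as a $k_1$-variety is also a quasi-Gorenstein finite cover of $X$ as a $k$-variety; thus I may assume $k=k_1$ and $X$ is geometrically irreducible over $k$.

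Now I would set $L := \overline{k}$, the algebraic closure, which is an algebraic, perfect, infinite extension of $k$. By geometric irreducibility, $X_L = X \times_k L$ is an irreducible (possibly non-reduced) projective scheme over $L$, and its normalization $X_L^{v}$ (as defined in Lemma \ref{LemmaProjectiveDescent}) is a normal projective variety over $L$. Applying Lemma \ref{LemmaProjectiveVarietyQGorensteinExtension} to $X_L^{v}$ (valid because $L$ is perfect) produces a finite surjective morphism $Y \to X_L^{v}$ from a $\mathbb{Q}$-Gorenstein normal projective $L$-variety $Y$, and then Lemma \ref{LemmaProjectiveVarietyQuasiGorensteinCoverFromQGorenstein} (valid because $L$ is infinite) applied to $Y$ produces a quasi-Gorenstein finite cover $Z \to Y$. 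The composite $Z \to Y \to X_L^{v}$ is a quasi-Gorenstein finite cover of $X_L^{v}$, so Lemma \ref{LemmaProjectiveDescent}, applied to the algebraic extension $L/k$, descends it to a quasi-Gorenstein finite cover of $X$, completing the proof.

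The main obstacle is the initial reduction: one has to pass to an appropriate subfield of $K(X)$ so that, after base change to the algebraic closure of $k$, the resulting scheme stays irreducible. This is handled by identifying $H^0(X,\mathcal{O}_X)$ with the algebraic closure of $k$ inside $K(X)$ and invoking the standard fact that algebraic (hence separable) closedness of the base field in $K(X)$ is equivalent to geometric irreducibility; once this is settled, the remaining steps are direct applications of the preceding lemmas in sequence.
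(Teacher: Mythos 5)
Your proposal is correct, and it runs on the same engine as the paper: Lemma \ref{LemmaProjectiveVarietyQGorensteinExtension} to get a $\mathbb{Q}$-Gorenstein cover over a perfect field, Lemma \ref{LemmaProjectiveVarietyQuasiGorensteinCoverFromQGorenstein} over an infinite field, and Lemma \ref{LemmaProjectiveDescent} to descend. The difference is purely in how the irreducibility hypothesis of Lemma \ref{LemmaProjectiveDescent} is arranged. The paper splits into cases: for infinite $k$ it base-changes to the perfect closure $k^\infty$, where irreducibility is automatic because the extension is purely inseparable, and applies the two construction lemmas there; for finite $k$ it first builds the $\mathbb{Q}$-Gorenstein cover $Y$, replaces the base field by $k'=H^0(Y,\mathcal{O}_Y)$, uses geometric normality/integrality of $Y$ over $k'$ to pass to $k^{\mathrm{alg}}$, and then descends the cover of $Y_{k^{\mathrm{alg}}}$ before composing with $Y\rightarrow X$. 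You instead normalize the base field once and for all at the level of $X$: since $X$ is normal and proper, $k_1=H^0(X,\mathcal{O}_X)$ is the algebraic closure of $k$ in $K(X)$, so $X$ is geometrically irreducible over $k_1$, and you can base-change directly to $\overline{k}$ (perfect and infinite), build the cover there, and descend in a single application of Lemma \ref{LemmaProjectiveDescent}; the passage from $k_1$ back to $k$ is harmless by the base-field independence of $\omega$ recalled at the start of the section. What your route buys is the elimination of the finite/infinite case split and of the intermediate descent step for $Y$; what it costs is only the standard facts that $H^0(X,\mathcal{O}_X)$ is the algebraic (hence separable) closure of $k$ in $K(X)$ and that this gives geometric irreducibility --- facts the paper uses in parallel form anyway for $Y$ over $k'$. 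Two small points you should make explicit if you write this up: $X$ stays projective over $k_1$ (the ample line bundle remains ample), and the variety $Y$ produced by Lemma \ref{LemmaProjectiveVarietyQGorensteinExtension} is normal (this is clear from its construction as an integral closure, and is needed to feed it into Lemma \ref{LemmaProjectiveVarietyQuasiGorensteinCoverFromQGorenstein}, exactly as in the paper's own proof).
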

\begin{proof}
   Suppose that $K$ is finite, thus perfect. Then $X$ admits a $\mathbb{Q}$-Gorenstein finite cover $Y$ by Lemma \ref{LemmaProjectiveVarietyQGorensteinExtension}. By the perfectness of $k$ as well as \cite[Tag 038O]{StacksProject}, $Y=\Proj\Big(\oplus_{n\ge 0}\Gamma\big(Y,\mathcal{O}_Y(n)\big)\Big)$, is geometrically normal. Therefore setting $k':=H^0(Y,\mathcal{O}_Y)$, the $k'$-projective variety $Y$  is geometrically normal and geometrically integral over $k'$ by \cite[Tag 0FD3]{StacksProject}. Consequently, $Y_{k^{\text{alg}}}:=Y\times_{k'}k^{\text{alg}}$ is a normal (irreducible) projective variety over the algebraic closure $k^{\text{alg}}$ of $k$ (or $k'$). Moreover,   the (field) base change $Y_{k^{\text{alg}}}$, of $Y$, is $\mathbb{Q}$-Gorenstein as well (Lemma \ref{LemmaCanonicalSheafAndBaseChange}). Thus  $Y_{k^{\text{alg}}}$ admits a quasi-Gorenstein finite cover $Z$ by Lemma \ref{LemmaProjectiveVarietyQuasiGorensteinCoverFromQGorenstein}, and hence so does $Y$  by Lemma \ref{LemmaProjectiveDescent}.   Consequently, so does $X$ and the statement follows for the case where $k$ is finite.

Now, we suppose that $k$ is an infinite  field whose perfect closure is denoted by $k^{\infty}$. In view of \cite[Tag 01OM and Tag 0BRA]{StacksProject}, $X_{k^\infty}:=X\times_kk^{\infty}$ is a, possibly non-reduced,  irreducible projective scheme over $k^\infty$. Thus, it admits a finite surjective morphism $f:Y\rightarrow X_{k^\infty}$ from a normal quasi-Gorenstein projective $k^\infty$-variety $Y$  by Lemma \ref{LemmaProjectiveVarietyQGorensteinExtension} and Lemma \ref{LemmaProjectiveVarietyQuasiGorensteinCoverFromQGorenstein}. Following the notation of the statement of Lemma \ref{LemmaProjectiveDescent}, such a quasi-Gorenstein finite cover $f$ necessarily factors through $Y\rightarrow X^v_{k^\infty}\rightarrow X_{k^\infty}$ (\cite[Tag 035Q(4)]{StacksProject}), implying that $X^v_{k^\infty}$ admits a quasi-Gorenstein finite cover. So the statement follows from Lemma \ref{LemmaProjectiveDescent} in this infinite case as well and the proof is complete.
\end{proof}

\begin{remark} As in the case of complete local normal domains (Theorem \ref{TheoremCoverCompleteCase}), one can probably find a   quasi-Gorenstein finite cover of an arbitrary normal projective variety whose induced extension of function fields is separable. For a proof, one can argue as in the complete local case.
\end{remark}

The example in the next remark is pointed out to the author by Karl Schwede.
\begin{remark}  In contrast to the   local (affine) case, projective varieties do not necessarily admit a \textit{Calabi-Yau} finite cover (i.e. a finite cover whose canonical divisor is trivial)  although they admit a quasi-Gorenstein finite cover  by Theorem \ref{TheoremKawamta} and Theorem \ref{TheoremQuasiGorensteinCoverProjectivevariety} (which the latter covers the case of characteristic $2$ as well). For example, let $Y$ be a nonsingular curve   over $\mathbb{C}$ and suppose  that there exists a finite Calabi-Yau cover $f:X\rightarrow Y$, thus $g(X)=\ell\big(\Gamma(X,\omega_X)\big)=\ell\big(\Gamma(X,\mathcal{O}_X)\big)=1$. Then, from \cite[IV, Corollary 2.4]{HartshorneAlgebraic} we get  necessarily that $g(Y)=1$ (and $Y$ is an elliptic curve) because the ramification divisor  $R$ of $f$ is indeed an effective divisor. Consequently, a non-elliptic curve admits no Calabi-Yau finite cover.
\end{remark}


\section{An application of a variation of  Theorem \ref{TheoremKawamta}}
 In \cite{RobertsAbelian}, Paul Roberts actually proves that:
 
  An Abelian extension of a local factorial domain $R$  whose Galois group (of the  extension of  the fraction fields) has order relatively prime to the characteristic of the residue field of $R$, is necessarily free as an $R$-module, so-called \textit{module-free} \footnote{Therefore  if $R$ is regular, then such an Abelian extension is necessarily Cohen-Macaulay.}.

  The following result, which is an immediate corollary to a variation of Theorem \ref{TheoremKawamta}, provides a converse to the above  Robert's result:
  
  \begin{corollary}\label{CorollaryConverseToRoberts}  Let  $R$ be a  normal domain of residual characteristic not equal to $2$ which is either   $\mathbb{N}_0$-graded over an infinite field $R_{[0]}$ or a  complete local ring with an infinite residue field. Suppose that  all of the  rank $2$  module-finite normal domain extensions of $R$, or more restrictively all of the module-finite normal domain generically square root extensions  of $R$, are free as an $R$-module. Then, $R$ is a factorial domain.
  \end{corollary}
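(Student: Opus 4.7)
I argue by contrapositive: assuming $R$ is not factorial, I will produce a module-finite normal domain generically square root extension $S$ of $R$ which fails to be free as an $R$-module, contradicting even the more restrictive hypothesis. Pick a Weil divisor $D$ on $X=\Spec(R)$ whose class in $\Cl(R)$ is non-zero. The plan is to mimic the construction of Theorem \ref{TheoremKawamta} verbatim, but with the canonical divisor $K_X$ replaced by this arbitrary Weil divisor $D$, to obtain a normal double cover $Y\to X$ of the form $Y=\Spec\bigl(\mathcal{O}_X\oplus \mathcal{O}_X(D-H)\bigr)$ for a suitable Cartier divisor $H$.

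Concretely, I would first invoke the same local or graded Bertini theorems that are used in Proposition \ref{PropositionCompleteInfiniteResidueField} (whose applicability requires exactly the infinite residue/base field hypothesis) to produce a sufficiently ample Cartier divisor $H$ together with a section $f\in\Gamma\bigl(X,\mathcal{O}_X(2H-2D)\bigr)$ whose zero scheme is a reduced divisor $\Delta\sim 2H-2D$. Note that in both of our settings every Cartier divisor on $X$ is automatically principal: in the local case because $\Pic(X)=0$, and in the $\mathbb{N}_0$-graded case because graded finitely generated projective modules over a connected graded ring are graded free. Choose a rational function $g\in\Frac(R)$ with $\text{div}(g)=\Delta+2D-2H$ and endow $\mathcal{O}_X\oplus\mathcal{O}_X(D-H)$ with the $\mathcal{O}_X$-algebra structure $(0,x)\cdot(0,y):=(gxy,0)$, exactly as in the statement of Theorem \ref{TheoremKawamta}. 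Reducedness of $\Delta$ implies $g$ is not a square in $\Frac(R)$, and the hypothesis on residual characteristic ensures the cover is generically separable; thus $S:=\Gamma(Y,\mathcal{O}_Y)$ is a rank two, module-finite, generically square root normal domain extension of $R$, where normality follows from the same Serre $(R_1)+(S_2)$ argument that appears in Kawamata's proof.

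The key module-theoretic observation is then that $S\cong R\oplus \mathcal{O}_X(D-H)$ as $R$-modules. If $S$ were free of rank two, then a determinant (equivalently, cancellation over a local or graded-local setting) would force the rank one reflexive module $\mathcal{O}_X(D-H)$ to itself be free, i.e.\ $D-H$ to be principal. Since $H$ is principal as noted above, this would make $[D]=0$ in $\Cl(R)$, contrary to our choice. Hence $S$ is a generically square root, module-finite, normal extension of $R$ that is not free, which contradicts the hypothesis and completes the proof.

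The main obstacle is checking that Theorem \ref{TheoremKawamta} really does survive the substitution of an arbitrary Weil divisor $D$ for the canonical divisor $K_X$. Inspecting either \cite{KawamataCrepant} or the exposition in \cite[Lemma 3.5]{McDonaldMultiplier}, one sees that the arguments for the module description $\mathcal{O}_Y=\mathcal{O}_X\oplus\mathcal{O}_X(D-H)$ and for normality of $Y$ use only the reducedness of $\Delta$ and the invertibility of $2$; the specific choice $D=K_X$ enters solely in the step identifying $Y$ as quasi-Gorenstein, which is irrelevant here. Once this variation is recorded—and the paper explicitly advertises it in the passage preceding the corollary—the residual characteristic $\neq 2$ and infinite field hypotheses enter in exactly the expected places (separability/normality of the double cover, respectively Bertini), and the argument sketched above goes through.
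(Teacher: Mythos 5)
Your proposal is correct and follows essentially the same route as the paper: both apply the variant of Theorem \ref{TheoremKawamta} with $K_X$ replaced by an arbitrary Weil divisor (relying on the local/graded Bertini theorems for the reduced divisor) and deduce that freeness of the resulting rank-two double cover $R\oplus\mathcal{O}_X(D-H)$ forces the divisorial class to be trivial, hence factoriality. The only difference is cosmetic: you argue by contrapositive and make explicit the $\Pic$-triviality and determinant/cancellation steps that the paper's proof leaves implicit.
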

\begin{proof}
  The proof of this fact is as easily as to apply a revised version of  Theorem \ref{TheoremKawamta} by replacing the canonical divisor $K_X$ in its statement with an arbitrary Weil divisor, and noting that Bertini theorem in its statement is available for both of the complete local case and the graded case (the existence of a reduced (effective)  Weil divisor $D$ as in Theorem \ref{TheoremKawamta} follows from the same blow-up  method used in the proof of \cite[Lemma 3.5]{McDonaldMultiplier}, see also    \cite[Theorem 2.17]{BhatManyEtALGlobally}).  
  
  This revised version of Theorem \ref{TheoremKawamta}  implies that for any pure height one ideal $\mathfrak{a}$ of $R$, the $R$-module $R\oplus \fa$ acquires a  normal domain $R$-algebra structure (inducing a square root extension of the fractions fields) which has to be module-free by our hypothesis. This implies that $\fa$ is free. Therefore, $R$ is a factorial domain. 
\end{proof}  

  The above fact implies an immediate important application as given in the next corollary. This corollary extends the main result of \cite{PatankarVanishing} to the ($2$-dimensional) complete local case. 

 \begin{corollary}\label{CorollaryExtensionOfPatankar'sResult} Let  $(R,\fm,k)$ be a $2$-dimensional complete local normal domain of equal characteristic zero. Let $R^+$ be the absolute integral closure of $R$. If  $\text{Tor}^R_{i}(R^+,k)=0$  for some $i\ge 1$, then $R$ is a regular local ring.
 \end{corollary}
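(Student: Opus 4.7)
The plan is to reduce regularity of $R$ to factoriality (via Corollary \ref{CorollaryConverseToRoberts}) combined with a further consequence of the $\Tor$-vanishing. First, note that in equal characteristic zero the residue field $k$ contains $\mathbb{Q}$, so $k$ is infinite and of residual characteristic $\neq 2$; hence the base ring hypotheses of Corollary \ref{CorollaryConverseToRoberts} are met.

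To invoke Corollary \ref{CorollaryConverseToRoberts}, I would show that every module-finite normal domain generically square-root extension $S$ of $R$ is free as an $R$-module. Because $R$ is $2$-dimensional normal, hence Cohen-Macaulay, and $S$ is a $2$-dimensional normal local ring module-finite over $R$, $S$ is maximal Cohen-Macaulay as an $R$-module. By Auslander-Buchsbaum, $S$ is $R$-free if and only if $\pd_R(S) < \infty$. Any such $S$ embeds into $R^+$, and writing $R^+ = \vil_\lambda S_\lambda$ as a filtered colimit of module-finite normal extensions with $S = S_{\lambda_0}$, the identity $0 = \Tor^R_i(R^+, k) = \vil_\lambda \Tor^R_i(S_\lambda, k)$ should be exploited, together with a rigidity-type upgrade for MCM modules over a $2$-dimensional CM ring, to force $\Tor^R_i(S, k) = 0$, hence $\pd_R(S) = 0$ and $S$ free. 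Corollary \ref{CorollaryConverseToRoberts} then yields factoriality of $R$.

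Factoriality of the $2$-dimensional normal complete local domain $R$ makes $[K_R] \in \Cl(R)$ trivial, so $\omega_R \cong R$ and $R$ is Gorenstein. In equal characteristic zero, a $2$-dimensional Gorenstein factorial complete local normal domain that is not regular must be a non-regular factorial hypersurface (such as the $E_8$ singularity). The final step would rule out each such non-regular possibility by invoking the $\Tor$-vanishing once more, together with the fact that in dimension two and equal characteristic zero, $R^+$ enjoys Cohen-Macaulay-like properties as an $R$-module, so that $\Tor^R_i(R^+, k) = 0$ forces a projective-resolution property equivalent to regularity of $R$. The main obstacle is the freeness step: translating the $\Tor$-vanishing of $R^+$ into $\Tor$-vanishing of a single finite subextension $S$, since $\Tor$ commutes with the directed colimit only on the module side and an individual element of $\Tor^R_i(S, k)$ may die only after passing to some larger $S_\mu$. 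Handling it will require either a rigidity result for MCM modules over a $2$-dimensional CM ring or a careful cofinal choice inside $R^+$.
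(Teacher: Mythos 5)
Your overall strategy (freeness of module-finite normal domain extensions, then Corollary \ref{CorollaryConverseToRoberts} to get factoriality, then upgrade factoriality to regularity) matches the paper's, but the two steps you leave open are exactly the ones carrying the mathematical weight, so the proposal has genuine gaps. The first is the freeness step, which you flag yourself: from $\Tor^R_i(R^+,k)=0$ one only knows that the filtered colimit of the groups $\Tor^R_i(S_\lambda,k)$ vanishes, and a class in $\Tor^R_i(S,k)$ for a fixed finite normal extension $S$ may die only after passing to a larger extension; no rigidity statement for maximal Cohen--Macaulay modules over a $2$-dimensional Cohen--Macaulay local ring is available to bridge this. (Your reduction after that point is fine and even simpler than you suggest: for finitely generated $S$, the vanishing of $\Tor^R_i(S,k)$ for a single $i\ge 1$ kills the $i$-th Betti number, so $\pd_R(S)<\infty$ and Auslander--Buchsbaum gives freeness; the entire difficulty is obtaining that vanishing for $S$.) The paper does not prove this step either: it quotes the arguments in the proof of Theorem A of \cite{PatankarVanishing}, which rest on almost mathematics, and the paper's closing remark even indicates (via graded factorial non-rational examples such as $\mathbb{C}[x,y,z]/(x^2+y^3+z^7)$) that the use of almost mathematics there seems necessary, so the elementary cofinality or rigidity shortcut you hope for is unlikely to exist.

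The second gap is the endgame after factoriality. The assertion that a non-regular candidate ``must be a non-regular factorial hypersurface such as $E_8$'' needs an algebraically closed residue field, which equal characteristic zero does not provide, and ``invoking the Tor-vanishing once more \dots forces a projective-resolution property equivalent to regularity'' is not an argument. The paper instead base changes to $R\otimes_k\overline{k}$ (an excellent henselian normal local domain by Lemma \ref{LemmaExcellentProperty}), shows every module-finite normal domain extension of $R\otimes_k\overline{k}$ descends to one of some $R\otimes_kL$ with $L/k$ finite and hence, by the freeness over $R$ from the first step and the local criterion of flatness, is free, so Corollary \ref{CorollaryConverseToRoberts} gives factoriality of $R\otimes_k\overline{k}$; then Lipman's Theorem (17.4) (which requires the algebraically closed residue field) yields a rational singularity, so $R\otimes_k\overline{k}$ is a two-dimensional Gorenstein rational isolated singularity, and by \cite{PostralEtALCovers} it admits a non-singular finite cover, which is again free, forcing regularity of $R\otimes_k\overline{k}$ and hence of $R$ by faithfully flat descent. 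If you wish to avoid this base change, you must supply substitutes for Lipman's theorem and for the existence of the non-singular cover over a non-closed residue field, which your sketch does not do; note also that factoriality is not preserved under completion or residue field extension in general, which is why the paper verifies freeness of extensions of $R\otimes_k\overline{k}$ directly rather than transporting factoriality.
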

 \begin{proof}
   By  \cite[Proposition 4.2]{PatankarVanishing}, $R$ is normal.  By some arguments in the proof of \cite[Theorem A]{PatankarVanishing}, any module finite normal domain extension of $R$ is module-free. 
   
   Let $k$ be the coefficient field of $R$. In view of Lemma \ref{LemmaExcellentProperty}, $\big(R\otimes_k \overline{k},(\fm)\big)$ is an excellent normal henselian local domain ($\overline{k}$ denotes the algebraic closure of $k$) which is a direct limit of module-finite complete local normal domains of the form $R\otimes_kL$ wherein $L$ is a finite algebraic extension of $k$. 
   
   Let $T$ be a finite normal domain extension of $R\otimes_k\overline{k}$. Thus, $T$   is of the form $T=S\otimes_{(R\otimes_kL)}(R\otimes_k\overline{k})$ for some finite algebraic extension $L/k$ and some normal domain module-finite  extension $S$ of $R\otimes_kL$ (as argued in the fifth paragraph of the proof of Theorem \ref{TheoremCoverCompleteCase}).  Thus $S$ is also a normal domain module-finite extension  of $R$, therefore it is a free $R$-module in view of  the first paragraph of the proof. Hence, $S$ is a free $R\otimes_kL$-module as well by virtue of the local criterion of flatness (\cite[Tag 00ML]{StacksProject}). Consequently, $T$ is a free $R\otimes_k\overline{k}$-module. Since, $T$ was arbitrary, we conclude from Corollary \ref{CorollaryConverseToRoberts} that $R\otimes_k\overline{k}$ is a factorial. 

   Then,  we can invoke to \cite[Theorem (17.4)]{LipmanRational} 
   to conclude that $R\otimes_k\overline{k}$ has at most rational singularities. Therefore, the proof is complete as proved in \cite[Theorem 4.5]{PatankarVanishing} (because $R\otimes_k\overline{k}$ is a Gorenstein rational isolated singularity of dimension $2$, thus admits a non-singular finite   cover (\cite{PostralEtALCovers}) which should be module-free as discussed before).
 \end{proof}
 
The next remark  suggests that the use of almost mathematics in the proof of \cite[Theorem A]{PatankarVanishing} seems to be necessary.
\begin{remark} 
  A graded version of the proof of  Corollary \ref{CorollaryExtensionOfPatankar'sResult}  does not prove (the graded result)  \cite[Theorem A]{PatankarVanishing}.   This is because, there exists an $\mathbb{N}_0$-graded  $2$-dimensional factorial domain over an algebraically closed field $R_{[0]}$ which does not have a rational singularity. In \cite[Theorem 5.1]{MoriGradedFactorial} \footnote{Shigefumi Mori's result is stated for the almost geometric graded case, for the (more) general graded case see  \cite[Remark after Corollary (1.7)]{WatanabeSomeRemarks}},  such graded factorial domains are classified by which one can observe that there are such graded rings with non-negative $a$-invariant, thus with a non-rational singularity  (see \cite[Theorem (3.3)]{WatanabeSomeRemarks}). More precisely, by virtue of \cite[Example 3.2(case d=2)]{SinghSpiroffDivisor} the graded ring $\mathbb{C}[x,y,z]/(x^2+y^3+z^7)$ is factorial with a   positive $a$-invariant, thus a  non-rational   singularity. 
\end{remark}

\begin{remark} One year before getting aware of Theorem \ref{TheoremKawamta} and based on some previous observations,   the author had in mind the idea  that the statement of Corollary \ref{CorollaryConverseToRoberts} should be true (and by which one obtains   Corollary \ref{CorollaryExtensionOfPatankar'sResult}). The author expects that there should be an alternative (perhaps more conceptual) proof for concluding the factoriality of (compete, graded, etc.) normal domains whose Abelian extensions are all module-free, specially a proof which also covers the, yet unsettled,  residual characteristic $2$ case as well.  
\end{remark}

\section{Acknowledgment}
 The author thanks greatly Karl Schwede and Peter McDonald for many in-depth
conversations. The author also thanks Anurag K. Singh for answering our questions.

\end{document}